\tikzset{->-/.style={decoration={
  markings,
  mark=at position .45 with {\arrow{>}}},postaction={decorate}}}
\newtheorem{definition}{Definition}[subsection]
\newtheorem{theorem}[definition]{Theorem}
\newtheorem{proposition}[definition]{Proposition}
\newtheorem{algorithm}[definition]{Algorithm}
\newtheorem{situation}[definition]{Situation}
\newtheorem{lemma}[definition]{Lemma}
\newcommand{\PP}{{\mathbb P}}
\newcommand{\cB}{\mathcal{B}}
\newcommand{\bC}{\mathbb{C}}
\newcommand{\cD}{\mathcal{D}}
\newcommand{\cE}{\mathcal{E}}
\newcommand{\cM}{\mathcal{M}}
\newcommand{\bN}{\mathbb{N}}
\newcommand{\cO}{\mathcal{O}}
\newcommand{\cS}{\mathcal{S}}
\newcommand{\cV}{\mathcal{V}}
\newcommand{\HH}{\mathrm{H}}
\newcommand{\barM}{\overline{\mathcal{M}}}
\newcommand{\bpf}{\text{bpf}}
\newcommand{\Tev}{\mathrm{Tev}}
\newcommand{\logTev}{\mathrm{logTev}}
\newcommand{\Bl}{\mathrm{Bl}}
\newcommand{\vir}{\text{vir}}
\DeclareMathOperator{\ord}{\text{ord}}
\DeclareMathOperator{\Supp}{\text{Supp}}
\newcommand{\mc}[1]{\mathcal{#1}}
\title{Enumerating log rational curves on some toric varieties}
\author{Carl Lian}
\address{Washington University in St. Louis, Department of Mathematics, 1 Brookings Drive
\hfill \newline\texttt{}
\indent St. Louis, MO 63130} \email{{\tt clian@wustl.edu}}
\author{Naufil Sakran}
\address{Tulane University, Department of Mathematics, 6823 St. Charles Ave 
\hfill \newline\texttt{}
\indent New Orleans, LA 70118} \email{{\tt nsakran@tulane.edu}}
\date{\today}
\begin{document}

\maketitle

\begin{abstract}
    The genus 0, fixed-domain log Gromov-Witten invariants of a smooth, projective toric variety $X$ enumerate maps from a general pointed rational curve to a smooth, projective toric variety passing through the maximal number of general points and with prescribed multiplicities along the toric boundary. We determine these invariants completely for the projective bundle $X=\PP_{\PP^r}(\cO^s\oplus\cO(-a))$, proving a conjecture of Cela--Iribar L\'{o}pez. A different conjecture when $X$ is the blow-up of $\PP^r$ at $r$ points is disproven. Whereas the conjectures were predicted using tropical methods, we give direct intersection-theoretic calculations on moduli spaces of ``naive log quasimaps.''
\end{abstract}


\section{Introduction}

\subsection{Counting curves and log curves}

Let $X$ be a smooth, projective variety. Gromov-Witten invariants of $X$ are intersection numbers against the virtual fundamental class $[\barM_{g,n}(X,\beta)]^{\vir}$ on the moduli space of stable maps, and virtually enumerate curves $C$ on $X$ \cite{behrend,bf}. 

Degeneration of the target variety has long played an important role in Gromov-Witten theory \cite{li1,li2}. In the simplest situation, $X$ degenerates into the union of two smooth, projective varieties $X_1\cup X_2$ meeting along a smooth divisor $D$. One hopes to count curves on $X$ in terms of counts of curves of $X_1,X_2$, but must now keep track of the incidences of the curves on $X_j$ with the divisor $D$. This leads naturally to the theory of logarithmic Gromov-Witten invariants of the log scheme $(X_j,D)$ \cite{gs,ac1,ac2}.

Calculations of Gromov-Witten invariants where the (pointed) complex structure of the domain curve is fixed also drew much attention early in the subject. The Vafa-Intriligator formula \cite{bdw,st,mo} determines Gromov-Witten invariants of Grassmannians upon restriction over a fixed point of $\barM_{g,n}$ \cite{mop}. After work of Tevelev \cite{tevelev}, the fixed-domain Gromov-Witten invariants have received renewed interest. The ``virtual Tevelev degrees'' of $X$ are defined by Buch-Pandharipande \cite{bp} to be the virtual degrees of the forgetful morphisms
\begin{equation*}
    \tau:\barM_{g,n}(X,\beta)\to \barM_{g,n}\times X^n,
\end{equation*}
though the term is somewhat ahistorical given the previous work.

A feature of the fixed-domain setting which has motivated much of the more recent work is that, compared to arbitrary Gromov-Witten invariants, the fixed-domain invariants tend to be enumerative much more often \cite{lp,bllrst,celadoan}. Roughly speaking, this means that the virtual invariants compute the \emph{actual} degree of $\tau$, denoted $\Tev^X_{g,n,\beta}$, upon restricting to the open locus of maps $\cM_{g,n}(X,\beta)$ with smooth domain. Concretely, $\Tev^X_{g,n,\beta}$ is the number of maps $f:C\to X$ in class $\beta$ from a fixed, general curve $C$ to $X$ satisfying $n$ incidence conditions $f(p_i)=x_i$ at general points of $C$ and $X$. Moreover, even when the enumerativity fails, the geometric (non-virtual) counts $\Tev^X_{g,n,\beta}$ can sometimes be accessed using different techniques \cite{cps,fl,cl1,cavdaw,cl_hirzebruch,lian_pr,cl2,cl_complete}.

For toric varieties $X$, Cela--Iribar L\'{o}pez \cite{cil} have considered the logarithmic, fixed-domain version of the problem, where the tangency profile of $f$ along the toric boundary of $X$ is constrained. For domain curves of genus 0, they have proven a tropical correspondence theorem \cite[Theorem 5]{cil}, showing that the (geometric or virtual) count of log rational curves on $X$ with the maximal number of incidence conditions imposed at general points matches a purely combinatorial enumeration of tropical curves. In fact, a more general tropical correspondence theorem for log Gromov-Witten invariants of toric varieties in any genus and with arbitrary $\psi$-class insertions (descendants) had been obtained earlier by Mandel-Ruddat \cite{mr}. The fixed-domain setting may be recovered by inserting appropriate $\psi$-classes.

Using the tropical correspondence theorem, the genus 0, logarithmic, fixed-domain invariants for Hirzebruch surfaces are computed \cite[Theorem 7]{cil}, and conjectures for other toric varieties are proposed \cite[Conjectures 14-15]{cil}. See also \cite{zhou,grzz} for further recent work exploiting the connection between log Gromov-Witten theory of projective bundles on toric varieties and tropical curve-counting. The purpose of the present paper is to address the conjectures of Cela--Iribar L\'{o}pez, but in the realm of algebro-geometric intersection theory, rather than tropical geometry. Our approach via moduli spaces of ``naive log quasimaps'' is direct, avoiding the combinatorial analyses typical in tropical enumerations, and leads to explicit formulas. The invariants of interest are reviewed in the next section.

\subsection{Logarithmic Tevelev degrees}\label{sec:logtev}
Let $X/\mathbb{C}$ be a smooth, projective toric variety, and let $D_\rho\subset X$ be the torus-invariant divisors, indexed by maximal rays $\rho\in\Sigma(1)$ in the fan $\Sigma$ of $X$. The union of the $D_\rho$ is referred to throughout as the \emph{boundary} of $X$, and its complement, denoted $X^\circ \subset X$, is referred to as the \emph{interior}.

Let $\beta\in H_2(X)$ be an effective curve class, and let $n\ge 3$ be an integer. Assume that $\int_X\beta\cdot D_{\rho}\ge0$ for all $\rho\in\Sigma(1)$. For each $\rho$, let $\mu_\rho=(\mu_{\rho,v})_{v=1}^{m_\rho}\in \bN^{m_\rho}$ be a vector of positive integers with sum $\int_X\beta\cdot D_{\rho}$. If $\int_X\beta\cdot D_{\rho}=0$, then $\mu_\rho$ is the empty vector and $m_\rho=0$. Write $m=\sum_{\rho\in\Sigma(1)}m_\rho$. We consider maps 
\begin{equation*}
    f:(\PP^1,\{p_i\}_{i=1}^{n},\{\{q_{\rho,v}\}_{\rho\in\Sigma(1)}\}_{v=1}^{m_\rho})\to X
\end{equation*}
where the domain is viewed as a pointed curve in $\cM_{0,n+m}$, so in particular the points $p_i,q_{\rho,v}$ are distinct. Furthermore, we require that
\begin{equation*}
f^{*}D_\rho=\sum_{v=1}^{m_\rho}\mu_{\rho,v}q_{\rho,v}.
\end{equation*}
Thus, the points $q_{\rho,v}$ are exactly those points mapping to the toric boundary of $X$, with multiplicities $\mu_{\rho,v}$. In particular, $f$ maps the $p_i$ to $X^\circ$, and $f_{*}[C]=\beta$. The letter $\Gamma$ is used to denote the combinatorial data of the curve class $\beta$ and the vectors $\mu_\rho$ specifiying the tangency profile of $f$ along the boundary. The additional variable $n$ denotes the number of marked points $p_i$.

\begin{definition}
    Let $\cM_{\Gamma,n}(X)$ be the moduli space of pointed maps $f$ as above. Isomorphisms of maps $f\cong f'$ are commutative diagrams
    \begin{equation*}
\xymatrix{
(\PP^1,\{p_i\}_{i=1}^{n},\{\{q_{\rho,v}\}_{\rho\in\Sigma(1)}\}_{v=1}^{m_\rho}) \ar[rd]_{f} \ar[dd] & \\
 & X \\
(\PP^1,\{p'_i\}_{i=1}^{n},\{\{q'_{\rho,v}\}_{\rho\in\Sigma(1)}\}_{v=1}^{m_\rho})\ar[ru]^{f'} &
}
\end{equation*}
where the vertical arrow is an isomorphism of pointed curves.
\end{definition}

$\cM_{\Gamma,n}(X)$ is a smooth, quasi-projective variety of dimension $(m+n-3)+\dim(X)$. The forgetful morphism
\begin{equation*}
    \tau:\cM_{\Gamma,n}(X)\to \cM_{0,n}\times X^n
\end{equation*}
remembers the pointed curve $(\PP^1,\{p_i\}_{i=1}^{n})\in\cM_{0,n}$ and the collection of evaluated points $\{f(p_i)\}_{i=1}^{n}\in X^n$. If
\begin{equation}\label{eq:dim_constraint}
    n=\frac{m}{\dim(X)}+1,
\end{equation}
then the source and target of $\tau$ have the same dimension.

\begin{definition}
    (cf. \cite[Definition 3]{cil}) Assuming \eqref{eq:dim_constraint}, the (genus 0) \emph{logarithmic} Tevelev degree $\logTev^X_{\Gamma}$ is defined by $\deg(\tau)$.
\end{definition}

For a fixed, general pointed curve $(\PP^1,\{p_i\}_{i=1}^{n})$ and a general choice of points $\{x_i\}_{i=1}^{n}\in X^n$, the integer $\logTev^X_{\Gamma}\ge0$ enumerates maps $f\in \cM_\Gamma(X)$ satisfying $f(p_i)=x_i$. Requiring that $f\in\cM_\Gamma(X)$ constrains $f$ to be incident to each $D_\rho$ with tangency profile $\mu_\rho$.

The logarithmic Tevelev degree $\Tev^X_{\Gamma}$ in \cite{cil} is instead defined by the quotient of $\deg(\tau)$ by the combinatorial factor
\begin{equation*}
    \prod_{\rho\in\Sigma(1)}\prod_{u\ge 1}\#\{u:\mu_{\rho,v}=u\}!.
\end{equation*}
This has the effect of identifying maps which are equal upon permuting marked points $q_{\rho,v}$ with the same multiplicities $u=\mu_{\rho,v}$. This is natural in order for $\Tev^X_\Gamma$ to agree with the usual geometric Tevelev degree $\Tev^X_{0,n,\beta}$ when $\mu_\rho=(1)^{m_\rho}$ for all $\rho\in\Sigma(1)$. However, this combinatorial factor does not seem to arise naturally in the logarithmic calculations, so we omit it in our definition. We use the notation $\logTev^X_{\Gamma}$ to contrast with theirs, but keep the term ``logarithmic Tevelev degree.''

The moduli space of genus 0, log stable maps $\barM_{\Gamma,n}(X)$ is irreducible of expected dimension \cite[Proposition 3.3.5]{ranganathan}, so one can just as well define $\logTev^X_\Gamma$ to be the degree of
\begin{equation*}
    \tau:\barM_{\Gamma,n}(X)\to\barM_{0,n}\times X^n.
\end{equation*}
In this way, $\logTev^X_{\Gamma}$ is given by the integral of a tautological class on $\barM_{\Gamma,n}(X)$. In higher genus, the moduli space of log stable maps no longer has expected dimension, so one should instead integrate against the virtual fundamental class. We work in this paper only in genus 0, where the virtual class is the usual fundamental class.

For calculations, it will be useful to consider log maps to $\PP^1$ where the marked points $p_i$ are not part of the data. Fix the data of $\beta,\mu_\rho$ as above.
\begin{definition}\label{def:maps_no_n}
    Let $\cM_{\Gamma}(X)$ be the moduli space of pointed maps
\begin{equation*}
    f:(\PP^1,\{\{q_{\rho,v}\}_{\rho\in\Sigma(1)}\}_{v=1}^{m_\rho}) \to X
\end{equation*}
with $f_{*}[\PP^1]=\beta$ and the same tangency conditions as before. Isomorphisms of maps $f\cong f'$ are commutative diagrams
    \begin{equation*}
\xymatrix{
(\PP^1,\{\{q_{\rho,v}\}_{\rho\in\Sigma(1)}\}_{v=1}^{m_\rho}) \ar[rd]_{f} \ar[dd]_{\mathrm{id}} & \\
 & X \\
(\PP^1,\{\{q_{\rho,v}\}_{\rho\in\Sigma(1)}\}_{v=1}^{m_\rho})\ar[ru]^{f'} &
}
\end{equation*}
where the vertical arrow is the \emph{identity}.
\end{definition}
That is, $\cM_{\Gamma}(X)$ parametrizes log maps with the same tangency profiles as $\cM_{\Gamma,n}(X)$, except that the domain curve is viewed as a \emph{parametrized} $\PP^1$ (as opposed to a genus 0 curve up to isomorphism), and the points $p_i$ are not part of the data. $\cM_{\Gamma}(X)$ is a smooth, quasi-projective variety of dimension $m+\dim(X)$. We will still be interested in imposing incidence conditions at fixed, general points $p_i\in \PP^1$. After fixing such points, in contrast to $\cM_{\Gamma,n}(X)$, maps $f\in\cM_{\Gamma}(X)$ may send the $p_i$ to the boundary of $X$; equivalently, the points $q_{\rho,v}$ may become equal to the $p_i$.


\subsection{New results}\label{sec:new_results}

In this paper, we consider the genus 0, logarithmic Tevelev degrees of two families of toric varieties, though the techniques should extend in greater generality. The first family we consider is that of the projective bundles
\begin{equation*}
    X_{r,s,a}:=\PP(\cO^s\oplus\cO(-a))\to\PP^r.
\end{equation*}
These are blow-ups of $\PP^{r+s}$ along linear spaces when $a=1$, and Hirzebruch surfaces when $r=s=1$. We completely determine the invariants $\logTev^{X_{r,s,a}}_{\Gamma}$.

\begin{theorem}\label{thm:X_rsa_intro}
Assume that we are in Situation \ref{sit:tev_X_rsa}, so that the logarithmic Tevelev degrees of $X_{r,s,a}$ are defined. 

If all of the inequalities 
    \begin{align*}
        m_j &\le n-1 \text{ for all }j=1,\ldots,r+s+1,\\
        m_{r+2}+\cdots+m_{r+s+1}&\ge (s-1)(n-1),\\
        m_0+m_{r+2}+\cdots+m_{r+s+1}&\le s(n-1)
    \end{align*}
hold, then
    \begin{equation*}
\logTev^{X_{r,s,a}}_{\Gamma}=\left(\prod_{j=0}^{r+s+1}m_j!\right)\left(\prod_{j=0}^{r+s+1}\prod_{v=1}^{m_j}\mu_{j,v}\right)a^{k_0-m_0}\binom{k_0}{m_0},
    \end{equation*}
    where 
    \begin{equation*}
        k_0=s(n-1)-\sum_{j=r+2}^{r+s+1}m_j=\sum_{j=0}^{r+1}m_j-r(n-1).
        \end{equation*}
When $a=0$, the expression $0^0$ is interpreted to equal 1.  
        
Otherwise, we have $\logTev^{X_{r,s,a}}_{\Gamma}=0$.
\end{theorem}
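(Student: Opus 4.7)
The plan is to realize $\cM_\Gamma(X_{r,s,a})$ as an open subset of a space $\mathcal{N}_\Gamma$ of \emph{naive log quasimaps}, parametrizing tuples of sections on $\PP^1$ via the projective-bundle structure of $X_{r,s,a}$. Concretely, a map $f\colon \PP^1 \to X_{r,s,a}$ of class $\beta = d_1\ell_1 + d_2\ell_2$ with the prescribed tangency profile is equivalent to a tuple $(u_0, \ldots, u_r;\, t_0, t_1, \ldots, t_s)$, where $u_k\in H^0(\cO_{\PP^1}(d_1))$ encode the base map $g = \pi\circ f$ to $\PP^r$, and $t_0\in H^0(\cO_{\PP^1}(d_2 + ad_1))$, $t_j\in H^0(\cO_{\PP^1}(d_2))$ ($j\ge 1$) encode the relative quotient $g^*(\cO^s\oplus\cO(-a)) \twoheadrightarrow \cO_{\PP^1}(d_2)$. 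Since the prescribed tangency multiplicities saturate the degree of each section, each one is determined up to one scalar by the positions $q_{j,v}$ of its zeros; after quotienting by the $(\bC^*)^2$-rescaling of the two section tuples, $\mathcal{N}_\Gamma$ has dimension $m + r + s = (r+s)n$, matching $\dim X^n$, so $\logTev^{X_{r,s,a}}_\Gamma$ is the degree of the evaluation map $\mathcal{N}_\Gamma\to X^n$.

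After fixing a gauge (say $c_0^u = c_1^t = 1$), the $n(r+s)$ incidence conditions $f(p_i)=x_i$ decompose along the direct sum $\cO^s\oplus\cO(-a)$ into three subsystems: $r(n-1)$ base-consistency equations in the positions $q_{j,v}$ for $j\in\{1,\ldots,r+1\}$; $(s-1)(n-1)$ untwisted-fiber consistency equations in the positions $q_{j,v}$ for $j\in\{r+2,\ldots,r+s+1\}$; and $n-1$ coupling equations (from the twisted summand) mixing the $q_{0,v}$'s with the base and untwisted-fiber positions. The first two subsystems cut out families of dimensions $k_0 - m_0$ and $n-1-k_0$ respectively, both nonnegative under the hypotheses of the theorem; together with the $m_0$ free positions $q_{0,v}$, the residual parameter space has dimension $n-1$, and the coupling subsystem cuts it to zero length.

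The central computation is the enumeration of this residual cut. When $a = 0$ the coupling decouples from the base, and the problem reduces to the product of independent $\PP^r$ and $\PP^s$ logarithmic Tevelev counts, each contributing a factor of $\prod m_j!\prod \mu_{j,v}$, recovering the formula in the $a=0$ case. For $a > 0$, the coupling equations prescribe the values at $p_i$ of the rational function $\Phi(x) := u_0(x)^a\, t_1(x)/t_0(x)$ of degree $d_2 + ad_1$, which has zeros of multiplicity $a\mu_{1,v}$ at $q_{1,v}$ and $\mu_{r+2,v}$ at $q_{r+2,v}$, and poles of multiplicity $\mu_{0,v}$ at $q_{0,v}$. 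The factors $\prod_{j} m_j!$ and $\prod_{j,v}\mu_{j,v}$ (including the $j=0$ contributions) should emerge from carefully tracking the base and fiber subsystem counts together with the ordering of tangency marked points, while the new factor $a^{k_0-m_0}\binom{k_0}{m_0}$ should arise from the residual coupling cut: here $\binom{k_0}{m_0}$ has the interpretation of assigning $m_0$ of $k_0$ residual ``slots'' of $\Phi$ to the pole positions $q_{0,v}$, and $a^{k_0-m_0}$ records the $a$-fold multiplicity with which each of the remaining $k_0-m_0$ slots enters $\Phi$ through the power $u_0^a$.

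The main obstacle will be the precise identification of the residual coupling count with $a^{k_0-m_0}\binom{k_0}{m_0}$; I would expect this to require either an induction on $k_0 - m_0$ (for instance by degenerating one of the $q_{0,v}$'s to $\infty$, which decreases both $m_0$ and $k_0$ by $1$ and reduces to a case previously established) or a polynomial-identity computation expanding the coupling equations in terms of elementary symmetric functions of the $q_{0,v}$'s. For the vanishing half of the theorem, I would argue geometrically that each violated inequality obstructs the existence of naive quasimaps for generic $x_i$: if $m_j > n-1$ for some $j\ge 1$, one of the rational functions $u_k/u_0$ or $t_j/t_1$ would need to interpolate $n$ generic values while its zero and pole multiplicities prevent this for generic targets; failure of the second or third inequality respectively over-determines the untwisted-fiber or coupling subsystem, likewise forcing $\mathcal{N}_\Gamma$ to miss a generic point of $X^n$.
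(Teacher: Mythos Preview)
Your setup matches the paper's: both pass to a compact space of naive log quasimaps, built from the positions $q_{j,v}$ and the tuple of sections $(g_0,\ldots,g_{r+s+1})$ modulo the two $\bC^\times$-scalings. The divergence comes at the actual computation, and there your proposal has a genuine gap.

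You explicitly flag the identification of the ``residual coupling count'' with $a^{k_0-m_0}\binom{k_0}{m_0}$ as the main obstacle, and offer only hopes (an induction on $k_0-m_0$, or an unspecified polynomial identity) rather than an argument. The paper sidesteps this entirely: it realizes the quasimap space as a tower of projective bundles $\PP(\cE_2)\to\PP(\cE_1)\to\cB$, observes that each incidence class $[V(p,x)]$ equals $\zeta_1^r\zeta_2^s$, and then pushes $\zeta_2^{sn}$ and $\zeta_1^{rn}$ down via Segre classes. The factor $a^{k_0-m_0}\binom{k_0}{m_0}$ falls out immediately from the binomial expansion of $(\HH_0+a\zeta_1)^{k_0}$ in $\cS(\cE_2)$, with no combinatorics or induction required. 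Your proposed decomposition into three ``subsystems'' is the shadow of this push-forward structure, but trying to count solutions subsystem-by-subsystem at the level of equations is much harder than the Segre-class bookkeeping and, as written, is not carried out.

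There is also a second gap: you do not address why the intersection number on the compactified space equals the honest count $\logTev^{X_{r,s,a}}_\Gamma$. The paper spends significant effort (its transversality section) showing that for general $p_i,x_i$ the intersection $V=\bigcap V(p_i,x_i)$ is reduced of dimension~0 and supported entirely in $\cM_\Gamma(X_{r,s,a})$, provided the first two inequalities hold; this requires a nontrivial ``twisting'' argument to rule out base-points and a separate argument to rule out identically-zero sections. Without this, your degree computation on $\mathcal{N}_\Gamma$ could pick up excess contributions from the boundary. Your vanishing argument is closer in spirit to the paper's (which projects to $X_{r-1,s,a}$, $X_{r,s-1,a}$, $\PP^r$, or $\PP^{s-1}$ and compares dimensions), but again is only a sketch; note in particular that the third inequality $m_0+\sum_{j\ge r+2}m_j\le s(n-1)$ is not needed for the vanishing half---in the paper it emerges automatically as the condition $k_0\ge m_0$ under which the Segre integral is nonzero.
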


Theorem \ref{thm:X_rsa_intro} appears as Theorem \ref{thm:X_rsa} in the main body of the paper. The torus-invariant divisors $D_j\subset X_{r,s,a}$ are indexed by the integers $j=0,\ldots,r+s+1$, see \S\ref{sec:X_rsa}. The integer $m_0$ appearing in the binomial coefficient of the main formula corresponds in particular to the divisor $D_0$, which in turn is the vanishing locus of the coordinate corresponding to the summand $\cO(-a)$ in $X_{r,s,a}$. The $s=1$ case of Theorem \ref{thm:X_rsa_intro} proves \cite[Conjecture 14]{cil}. See \S\ref{sec:specializations} for other specializations of Theorem \ref{thm:X_rsa_intro}.

Our proof of Theorem \ref{thm:X_rsa_intro} proceeds by an intersection-theoretic calculation on a moduli space $Q_\Gamma(X_{r,s,a})$ of ``naive log quasimaps,'' in which $\cM_\Gamma(X_{r,s,a})$ (Definition \ref{def:maps_no_n}) is a dense open subset, see \S\ref{sec:quasimaps}. These moduli spaces are similar to those considered in \cite{cl2,cl_complete} for blow-ups of $\PP^r$, but incorporate the data of tangency profiles along the toric boundary.

A subtlety is that, unlike on the moduli space of log stable maps $\barM_{\Gamma,n}(X_{r,s,a})$, the natural intersection numbers on $Q_\Gamma(X_{r,s,a})$ are not always enumerative. We prove, however, that in all cases where the enumerativity fails, the logarithmic Tevelev degree is zero. The advantage of the space $Q_\Gamma(X_{r,s,a})$ is that it admits a simple description as a tower of projective bundles over a product of projective lines, so the intersection-theoretic calculation is relatively straightforward.

The space of naive log quasimaps is constructed in \S\ref{sec:moduli}. The enumerativity of tautological intersection numbers on $Q_\Gamma(X_{r,s,a})$ is considered in \S\ref{sec:transversality}, and the calculation of $\logTev^{X_{r,s,a}}_\Gamma$ is completed in \S\ref{sec:logTev_X_rsa}. 

The other $X$ we consider is the blow-up of $\PP^r$ at $r$ (torus-fixed) points, which occupies \S\ref{sec:bl}. While we do not determine the logarithmic Tevelev degrees completely, we show in Theorem \ref{thm:conj15_false} that the prediction of \cite[Conjecture 15]{cil} does not always hold, already when $r=2$. We follow a similar program to the case $X=X_{r,s,a}$, constructing a moduli space $Q_\Gamma(X)$ of naive log quasimaps to $X$. While the value of $\logTev^X_\Gamma$ predicted by \cite[Conjecture 15]{cil} equals a tautological intersection number on our moduli space $Q_\Gamma(X)$, we find that this intersection number can fail to be enumerative even when $\logTev^X_{\Gamma}$ cannot be proven to be zero. Using the excess intersection formula, we compute $\logTev^X_\Gamma$ to be both non-zero and different from the value predicted by \cite[Conjecture 15]{cil} in one example (and the method extends to many more).

\subsection{Conventions}
\begin{itemize}
    \item We work over $\bC$.
    \item The adjectives ``log'' and ``logarithmic'' are used interchangeably.
    \item If $\mc{E}$ is a vector bundle on a scheme $Y$, then $\PP(\mc{E})\to Y$ is the projective bundle of \emph{lines} in the fibers of $\mc{E}$ over $Y$.
\end{itemize}

\subsection{Acknowledgments}

Preliminary discussions related to this project took place at the SLMath (formerly MSRI) summer school on Algebraic Curves in July 2024, and at the workshop ``Tevelev Degrees and related topics'' at UIUC in October 2024. We thank the organizers and other participants of these events for providing an inspiring work environment. We also thank Alessio Cela, Helge Ruddat, and the anonymous referee for comments on drafts of this work. C.L. has been supported by NSF Postdoctoral Fellowship DMS-2001976, an AMS-Simons travel grant, and the Summer Scholars program at Tufts University.

\section{Moduli of log rational curves on $\PP_{\PP^r}(\cO^s\oplus\cO(-a))$}\label{sec:moduli}

In this section, we discuss the moduli spaces needed for the calculation of $\logTev^{X_{r,s,a}}_\Gamma$, where $X_{r,s,a}$ is the toric variety discussed in \S\ref{sec:new_results}.

\subsection{The target variety}\label{sec:X_rsa}
\begin{definition}
    Let $a\geq 0,r\geq 1$ and $s\geq 1$ be integers. Define the $\PP^s$-bundle over $\PP^r$
    \begin{equation*}
        \pi:X_{r,s,a}=\PP_{\PP^r}(\cO^s\oplus\cO(-a))\to\PP^r.
    \end{equation*}
\end{definition}
A point $x\in X_{r,s,a}$ is given by $(r+s+2)$-tuple $(y_0,\ldots,y_{r+s+1})$ of complex numbers, taken up to equivalence under two actions by $\bC^{\times}$:
    \begin{align}\label{eq:X_rsa_scaling}
 \nonumber      (y_0,y_1,\ldots,y_{r+1},y_{r+2},\ldots,y_{r+s+1}) &\sim (\lambda^{-a}y_0,\lambda y_1,\ldots,\lambda y_{r+1},y_{r+2},\ldots,y_{r+s+1}),
        \\
        (y_0,y_1,\ldots,y_{r+1},y_{r+2},\ldots,y_{r+s+1})&\sim (\lambda y_0,y_1,\ldots,y_{r+1},\lambda y_{r+2},\ldots,\lambda y_{r+s+1})\text{ for all } \lambda\in \bC^{\times}.
    \end{align}
Furthermore, the collections of coordinates $\{y_1,\ldots, y_{r+1}\}$ and $\{y_0,y_{r+2},\ldots, y_{r+s+1}\}$ are required not all to vanish. The point $\pi(x)\in\PP^r$ is given by $[y_1:\cdots:y_{r+1}]$, and the fiber of $\pi$ over $[y_1:\cdots:y_{r+1}]$ is given by the projective space with coordinates $y_0,y_{r+2},\ldots,y_{r+s+1}$.

$X_{r,s,a}$ is toric, with action $(\bC^\times)^{r+s}\times X_{r,s,a}\to X_{r,s,a}$ given by
\begin{align*}
    &(t_1,\ldots,t_{r+s})\cdot (y_0,y_1,\ldots,y_{r+1},y_{r+2},\ldots,y_{r+s+1})\\
    =&(y_0,t_1y_1,\ldots,t_ry_r,y_{r+1},t_{r+1}y_{r+2},\ldots,t_{r+s}y_{r+s+1}).
\end{align*}
In this way, the $T$-invariant divisors on $X_{r,s,a}$ are cut out precisely by the $y_j$. Write $D_j=V(y_j)\subset X_{r,s,a}$, so that the set $\Sigma(1)$ of rays in the fan of $X_{r,s,a}$ is identified with the set of indices $\{0,\ldots,r+s+1\}$.

\subsection{Maps to $X_{r,s,a}$}\label{sec:maps}

In this section, we discuss the parametrization of maps from rational curves to $X_{r,s,a}$ with prescribed tangency orders at the boundary. See \cite{cox} for a description of maps to any toric variety. Fix data $\Gamma$ (see \S\ref{sec:logtev}) associated to $X_{r,s,a}$, and let $f:\PP^1\to X_{r,s,a}$ be a map in $\cM_\Gamma(X_{r,s,a})$ (Definition \ref{def:maps_no_n}). The map $f$ fits into a commutative diagram
\begin{equation*}
    \xymatrix{
    \PP^1 \ar[r]^(0.2){f} \ar[rd]_g & X_{r,s,a}=\PP(\cO^s\oplus\cO(-a)) \ar[d]^{\pi} \\
     & \PP^r
    }
    .
\end{equation*}

The map $g:\PP^1\to\PP^r$ is given by a tuple $[g_1:\cdots:g_{r+1}]$, where $g_j\in H^0(\PP^1,\cO(b))$, for some $b\ge 0$. If $g$ is fixed, then $f$ is determined by the data of a surjection
\begin{equation}\label{eq:proj_bundle_sheaf_map}
    \cO^s\oplus\cO(ab) = g^{*}(\cO^s\oplus\cO(-a))^\vee \to \cO(c),
\end{equation}
of sheaves on $\PP^1$, which in turn is given by sections $g_{r+2},\ldots,g_{r+s+1}\in H^0(\PP^1,\cO(c))$ and $g_0\in H^0(\PP^1,\cO(c-ab))$. The sections $g_j$ are taken up to equivalence under the scalings
    \begin{align}\label{eq:section_scaling}
 \nonumber          (g_0,g_1,\ldots,g_{r+1},g_{r+2},\ldots,g_{r+s+1}) &\sim (\lambda^{-a}g_0,\lambda g_1,\ldots,\lambda g_{r+1},g_{r+2},\ldots,g_{r+s+1}),
        \\
     (g_0,g_1,\ldots,g_{r+1},g_{r+2},\ldots,g_{r+s+1})&\sim (\lambda g_0,g_1,\ldots,g_{r+1},\lambda g_{r+2},\ldots,\lambda g_{r+s+1})\text{ for all } \lambda\in \bC^{\times},
    \end{align}
in parallel to \eqref{eq:X_rsa_scaling}. 

In order for the sections $g_j$ possibly to define a map $f\in\cM_\Gamma(X_{r,s,a})$ lying generically in the interior of $X_{r,s,a}$, none of the sections $g_j$ can be identically zero. In particular, we must have $c\ge ab\ge 0$. Thus, we may define, by abuse of notation, the divisors 
\begin{equation*}
    D_j=\text{div}(g_j)\subset \PP^1,
\end{equation*}
which are the pullbacks under $g$ of the corresponding toric divisors $D_j\subset X$. In order for $f\in \cM_\Gamma(X_{r,s,a})$, we need
\begin{equation}\label{Dj_points}
    D_j=\sum_{v=1}^{\mu_j} \mu_{j,v}q_{j,v}
\end{equation}
for some \emph{distinct} points $q_{j,v}\in\PP^1$, where the multiplicities $\mu_{j,v}$ of the points $q_{j,v}\in\Supp(D_j)$ are prescribed by the data $\Gamma$.

We will be interested in the logarithmic Tevelev degrees $\logTev^{X_{r,s,a}}_{\Gamma}$. Let us record the set-up for the problem.

\begin{situation}\label{sit:tev_X_rsa}
    Fix integers $r,s\ge 1$ and $a\ge 0$, and tangency data $\Gamma$ for maps to $X_{r,s,a}$ as above. In particular, we assume that $b\ge 0$ and $c\ge ab\ge 0$. Assume (see \eqref{eq:dim_constraint}) that
    \begin{equation*}
        n=\frac{m}{r+s}+1\ge 3
    \end{equation*}
    is an integer, where $m=\sum_{j=0}^{r+s+1}m_j$ is the total number of distinct intersection points of a rational curve with the toric boundary, as prescribed by $\Gamma$. Fix general points $p_1,\ldots,p_n\in \PP^1$ and $x_1,\ldots,x_n\in X^\circ\subset X_{r,s,a}$, where $X^\circ$ is the interior of $X_{r,s,a}$.
\end{situation}

\subsection{Naive log quasimaps to $X_{r,s,a}$}\label{sec:quasimaps}

In this section, we construct a compactification $Q_\Gamma(X_{r,s,a})$ of $\cM_\Gamma(X_{r,s,a})$ based on the parametrization of the previous section. Define the integers $b,c$ as in \S\ref{sec:maps}, such that $b\ge 0$ and $c\ge ab\ge 0$. 

Let
\begin{equation*}
    \cB := \left(\prod_{v=1}^{m_0} \PP^1_{0,v}\right) \times \cdots \times \left(\prod_{v=1}^{m_{r+s+1}} \PP^1_{r+s+1,v}\right),
\end{equation*}
and let $\nu: \PP^1 \times \cB \longrightarrow \cB$ be the projection. $\cB$ is simply a product of copies of $\PP^1$, parametrizing possible positions of the variable points $q_{j,v}$.

For each $j\in[0,r+s+1]$ and $v\in[1,m_j]$, let $\cD_{j,v}\subset \PP^1\times \cB$ be the pullback of the diagonal from $\PP^1\times \PP_{j,v}$. For each $j$, define the Cartier divisor
\begin{equation*}
    \cD_j=\sum_{v=1}^{m_j}\mu_{j,v}\cD_{j,v}\subset \PP^1\times \cB.
\end{equation*}
\begin{definition}
Define the tower of projective bundles
 \begin{center}
     \begin{tikzcd}
     Q_\Gamma(X_{r,s,a}):=\PP\left(\bigoplus_{j=r+2}^{r+s+1}\nu_{*}(\cO_{\PP^1}(c)(-\mc{D}_j))\oplus \left(\nu_{*}(\cO_{\PP^1}(c-ab)(-\mc{D}_{0}))\otimes \cO_{\PP(\mc{E}_1)}(-a)\right)\right)  
     \arrow[d, "\pi_2"]
     \\
     \PP(\cE_1):=\PP\left(\bigoplus_{j=1}^{r+1} \nu_{*}(\cO_{\PP^1}(b)(-\mc{D}_j))\right)  
     \arrow[d, "\pi_1|_D"]
     \\
     \cB
     \end{tikzcd}
 \end{center}    
\end{definition}

The restrictions of the sheaves $\cO_{\PP^1}(c-ab)(-\cD_{0}),\cO_{\PP^1}(b)(-\cD_{j}),\cO_{\PP^1}(c)(-\cD_j)$ to the fibers of $\nu$ have degree 0, so their push-forwards are line bundles, by Cohomology and Base Change. 

Over a point of $\cB$, which we identify with a collection of divisors $D_j=\sum_{v=1}^{m_j}\mu_{j,v}p_{j,v}\subset\PP^1$, the intermediate projective bundle $\PP(\cE_1)$ parametrizes tuples of sections $g_1,\ldots,g_{r+1}\in H^0(\PP^1,\cO(b))$, not all zero, vanishing along the respective divisors $D_j$. Then, $Q_\Gamma(X)$ parametrizes the additional data of sections $g_0\in H^0(\cO(c-ab))$ and $g_{r+2},\ldots,g_{r+s+1}\in H^0(\cO(c))$, not all zero, again vanishing along the respective $D_j$. The two projective bundles furthermore parametrize these data precisely up to the scaling factors \eqref{eq:section_scaling}.

By construction, $Q_\Gamma(X_{r,s,a})$ is smooth, projective, and irreducible of dimension $m+r+s$. It admits a stratification by open subsets
\begin{equation*}
    \cM_\Gamma(X_{r,s,a})\subset\cM^{\bpf}_\Gamma(X_{r,s,a})\subset Q^{\neq 0}_\Gamma(X_{r,s,a})\subset Q_\Gamma(X_{r,s,a})
\end{equation*}
which we now define. First, the open subset $Q^{\neq 0}_\Gamma(X_{r,s,a})$ is the locus where all sections $g_j$ are not identically zero.

By the parametrization of \S\ref{sec:maps}, we may identify the open subset of $Q^{\neq0}_\Gamma(X_{r,s,a})$ where no two divisors $D_j$ share a common point in their supports with $\cM_\Gamma(X_{r,s,a})$. 

We define $\cM^{\bpf}_\Gamma(X_{r,s,a})$ to be the intermediate open subset of $Q^{\neq0}_\Gamma(X_{r,s,a})$ where the divisors $D_j$ satisfy the following two ``base-point-free'' properties.
\begin{enumerate}
     \item[(BPF1)] The divisors $D_1,\ldots,D_{r+1}$ share no common point $q$ in their support.
     \item[(BPF2)] The divisors $D_0,D_{r+2},\ldots,D_{r+s+1}$ share no common point  $q$ in their support.
\end{enumerate}
The sections $g_j$ underlying a point $f\in \cM^{\bpf}_\Gamma(X_{r,s,a})$ gives rise to a map $f:C\to X_{r,s,a}$ in class $\beta$ in the same way as in \S\ref{sec:maps}. However, the image of such an $f$ may meet the strata of $X_{r,s,a}$ of codimension at least 2, in which case $f\notin \cM_\Gamma(X_{r,s,a})$. We say that $f\in Q_\Gamma(X_{r,s,a})$ is \emph{bpf} if it lies in $\cM^{\bpf}_\Gamma(X_{r,s,a})$.

We refer to $Q_\Gamma(X_{r,s,a})$ as the moduli space of \emph{naive log quasimaps} to $X_{r,s,a}$. Ignoring the marked points $q_{j,v}$, elements $f\in Q_\Gamma(X_{r,s,a})$ are indeed quasimaps in the sense of Ciocan-Fontanine--Kim \cite{cfk}. If $f\in Q_\Gamma(X_{r,s,a})$ is not bpf, then it still defines a \emph{rational} map $\PP^1\dashrightarrow X_{r,s,a}$. After twisting down base-points, this map extends canonically to an actual map, but no longer in class $\beta$. A version of this twisting is made explicit in \S\ref{sec:twisting}.

The adjective ``log'' refers to the fact that the data of the sections $g_j$ underlying $f$ are considered relative to the data of underlying divisors, even when the $g_j$ are identically zero. Our space is ``naive'' in the sense that we allow the points $q_{j,v}$ to collide arbitrarily, rather than requring the data of a stable, logarithmic curve. The actual moduli space of log quasimaps to more general targets is constructed in \cite{shafi}.

We will frequently refer to points of $Q_\Gamma(X_{r,s,a})$ as simply ``quasimaps,'' but the underlying data of the divisors $D_j$ is always present, if implicit.

\subsection{Incidence loci}\label{sec:incidence}

Recall that we are interested in imposing conditions of the form $f(p)=x$ on maps $f\in \cM_\Gamma(X_{r,s,a})$. To do intersection theory, we will need to impose analogous conditions more generally on quasimaps.

\begin{definition}\label{def:V(p,x)}
Let $p\in\PP^1$ and $x\in X^\circ$ be points. Define the \emph{incidence locus} $V(p,x)\subset Q_\Gamma(X_{r,s,a})$ to be the closed subscheme of quasimaps $f$ satisfying the following two sets of conditions:
    \begin{enumerate}
        \item[(1)] $x_j\cdot g_1-x_1\cdot g_j\in H^0(\PP^1,\cO(b))$ vanishes at $p$ for  $j=2,\ldots,r+1$, and
        \item[(2)] $x_j\cdot g_1^ag_0-x_1^ax_0\cdot g_j\in  H^0(\PP^1,\cO(c))$ vanishes at $p$ for $j=r+2,\ldots,r+s+1$. 
    \end{enumerate}    
\end{definition}
The index $1$ plays no essential role. Because $x\in X^\circ$, the equations (1) and (2) imply more generally that
    \begin{enumerate}
        \item[(1)] $x_{j'}\cdot g_j-x_{j}\cdot g_{j'}\in H^0(\PP^1,\cO(b))$ vanishes at $p$ for any $j,j'\in \{1,\ldots,r+1\}$, and
        \item[(2)] $x_j\cdot g_{j'}^ag_0-x_{j'}^ax_0\cdot g_j\in  H^0(\PP^1,\cO(c))$ vanishes at $p$ for any $j\in\{r+2,\ldots,r+s+1\}$ and $j'\in\{1,\ldots,r+1\}$.
    \end{enumerate}
Definition \ref{def:V(p,x)} is given as stated in order for $V(p,x)\subset Q_\Gamma(X_{r,s,a})$ to be cut out by exactly $r+s=\dim(X_{r,s,a})$ equations, and therefore define a cycle class of cohomological degree $r+s$. In particular, $V(p,x)$ has codimension at most $r+s$; in fact, one may easily check that $V(p,x)$ is pure of codimension $r+s$, though this will not be logically necessary for what follows.

For any $p\in\PP^1$ and $x\in X^\circ$, the equations (1) of Definition \ref{def:V(p,x)} are the degeneracy loci of maps of line bundles
\begin{equation*}
    \cO_{\PP(\cE_1)}(-1)\to \bigoplus_{j=1}^{r+1} \nu_{*}(\cO_{\PP^1}(b)(-\cD_j))\to \nu_{*}(\cO_{\PP^1}(b)|_p).
\end{equation*}
on $\PP(\cE_1)$. The first map is the inclusion of the tautological sub-line bundle on $\PP(\cE_1)$. At a point of $\PP(\cE_1)$, its image is the tuple of sections
\begin{equation*}
    (g_1,\ldots,g_{r+1})\in \bigoplus_{j=1}^{r+1} H^0(\PP^1,\cO(b)(-D_j)).
\end{equation*}
The second map, given an index $j\in\{2,\ldots,r+1\}$, returns the restriction of $x_{1}\cdot g_j-x_j\cdot g_{1}$ to $p\in\PP^1$. It is induced by the inclusion $\cO_{\PP^1}(b)(-\cD_j)\subset \cO_{\PP^1}(b)$ (which amounts to viewing the sections $g_j$ instead as sections of $\cO_{\PP^1}(b)$) and the restriction to $p$.

Similarly, the equations (2) are the degeneracy loci of maps of line bundles
\begin{align*}
    \cO_{Q}(-1)&\to \bigoplus_{j=r+2}^{r+s+1}\nu_{*}(\cO_{\PP^1}(c)(-\mc{D}_j))\oplus \left(\nu_{*}(\cO_{\PP^1}(c-ab)(-\cD_{0}))\otimes \cO_{\PP(\cE_1)}(-a)\right)\\
    &\to \nu_{*}(\cO_{\PP^1}(c)|_p),
\end{align*}
where $\cO_Q(-1)$ denotes the universal sub-line bundle on $Q_\Gamma(X_{r,s,a})$, and the first map is again the tautological inclusion. At a point of $Q_\Gamma(X_{r,s,a})$, its image is the tuple of sections
\begin{equation*}
    (g_{r+2},\ldots,g_{r+s+1},g_0)\in \left(\bigoplus_{j=r+2}^{r+s+1} H^0(\PP^1,\cO(c)(-D_j))\right) \oplus H^0(\PP^1,\cO(c-ab)(-D_0)).
\end{equation*}
The second map, given an index $j\in \{r+2,\ldots,r+s+1\}$, returns the restriction of $x_j\cdot g_{1}^ag_0-x_{1}^ax_0\cdot g_j$ to $p\in \PP^1$. In particular, the restriction of second map to the last summand factors through the map
\begin{equation*}
    \nu_{*}(\cO_{\PP^1}(c-ab)(-\cD_{0}))\otimes \cO_{\PP(\cE_1)}(-a) \to \nu_{*}(\cO_{\PP^1}(c-ab)) \otimes (\nu_{*}(\cO_{\PP^1}(b)))^{\otimes a} \to \nu_{*}(\cO_{\PP^1}(c))
\end{equation*}
where the map $\cO_{\PP(\cE_1)}(-a)\to (\nu_{*}(\cO_{\PP^1}(b)))^{\otimes a}$ is induced from the universal sub-line bundle on $\PP(\cE_1)$, and has image $g_{1}^a$ at a given point of $Q_{\Gamma}(X_{r,s,a})$.

The restriction of $V(p,x)$ to the locus of bpf quasimaps $\cM^{\bpf}_\Gamma(X_{r,s,a})$ is precisely the (scheme-theoretic) locus of maps where $f(p)=x$. The closure of this locus in $Q_\Gamma(X_{r,s,a})$ is precisely $V(p,x)$, but this will also not be logically necessary for us.

One can just as easily relativize the construction:
\begin{definition}\label{def:V(p,x)_relative}
Define the closed subscheme
\begin{equation*}
\cV(p,x)\subset Q_\Gamma(X_{r,s,a})\times\PP^1\times X^\circ
\end{equation*}
to be the locus of $(f,p,x)$ satisfying (1) and (2) of Definition \ref{def:V(p,x)}.
\end{definition}
Tautologically, the restriction of $\cV(p,x)$ to any fixed points $p,x$ recovers the absolute subscheme $V(p,x)$.

\section{Transversality}\label{sec:transversality}

The needed moduli spaces in hand, we now consider the calculation of $\logTev^{X_{r,s,a}}_{\Gamma}$. Assume that we are in Situation \ref{sit:tev_X_rsa}. Then, the intersection
\begin{equation*}
    V:=\bigcap_{i=1}^n V(p_i,x_i) \subset Q_\Gamma(X_{r,s,a})
\end{equation*}
is of expected dimension 0. The degree of $V$ (as a 0-cycle) is therefore a candidate for the value of $\logTev^{X_{r,s,a}}_{\Gamma}$. In order to guarantee that $\deg(V)=\logTev^{X_{r,s,a}}_{\Gamma}$ (that is, the intersection number $\deg(V)$ is enumerative), we want $V$ to be reduced of dimension 0, and in addition supported in the locus $\cM_\Gamma(X)\subset Q_\Gamma(X)$. Indeed, if this is the case, then $V$ is identified scheme-theoretically with a general fiber of the morphism $\tau:\cM_{\Gamma,n}(X)\to\cM_{0,n}\times X^n$.

However, this does not always hold. What is actually true is the following.

\begin{proposition}\label{prop:X_rsa_all_transversality}
\quad
    \begin{enumerate}
\item[(i)] (Proposition \ref{prop:transversality_open_locus}) $V\cap \cM_\Gamma(X)$ is reduced of dimension 0.
\item[(ii)] (Proposition \ref{prop:transversality_open_locus}) $V\cap (\cM^{\bpf}_\Gamma(X)-\cM_\Gamma(X))$ is empty.
\item[(iii)] (Proposition \ref{prop:bp_transversality}) $V\cap (Q^{\neq0}_\Gamma(X)-\cM^{\bpf}_\Gamma(X))$ is empty.
\item[(iv)] (Proposition \ref{prop:sections cannot be zero}) If the inequalities
\begin{itemize}
     \item $m_j\le n-1$ for $j=1,\ldots,r+s+1$, and 
     \item $m_{r+2}+\cdots+m_{r+s+1}\ge (s-1)(n-1)$
\end{itemize}
hold, then $V\cap (Q_\Gamma(X)-Q^{\neq0}_\Gamma(X))$ is empty.
    \end{enumerate}
\end{proposition}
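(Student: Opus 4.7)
For parts (i) and (ii), I would treat them together via the forgetful morphism $\tau:\cM^{\bpf}_\Gamma(X_{r,s,a})\to\cM_{0,n}\times X_{r,s,a}^n$, which is well-defined because quasimaps in $\cM^{\bpf}_\Gamma$ are honest morphisms. Source and target are smooth, irreducible, of equal dimension $(m+n-3)+\dim X_{r,s,a}$. Characteristic-zero generic smoothness then gives a dichotomy: either $\tau$ is not dominant, in which case both claims hold vacuously over a generic target, or $\tau$ is generically étale, so the fiber over a generic target is reduced and zero-dimensional, establishing (i). The sublocus $\cM^{\bpf}_\Gamma-\cM_\Gamma$ (where two $D_j$'s across the two BPF groups share a support point) is a proper closed subset of $\cM^{\bpf}_\Gamma$, so its image under $\tau$ has strictly smaller dimension than the target and is avoided by a generic choice of $(p_i,x_i)$, yielding (ii).

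For part (iii), I would argue by ``twisting down base-points.'' If $f\in Q^{\neq 0}_\Gamma-\cM^{\bpf}_\Gamma$, then without loss of generality BPF1 fails, so $g_1,\ldots,g_{r+1}$ share a common zero $q\in\PP^1$. The twisting construction (to be made explicit in \S\ref{sec:twisting}) removes this base-point, producing a quasimap $\tilde f\in\cM^{\bpf}_{\tilde\Gamma}$ for modified tangency data $\tilde\Gamma$ with strictly smaller class $\tilde\beta<\beta$. The equations of Definition \ref{def:V(p,x)} descend through the twist---both sides of (1) rescale by a common factor, and (2) is preserved after adjusting for the factor of $g_1^a$ via the compatible structure of $\PP(\cE_1)$---so $\tilde f$ still satisfies the $n$ incidence conditions. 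Since $\dim\cM_{\tilde\Gamma}<\dim\cM_{0,n}\times X_{r,s,a}^n$, the argument of (i) applied to $\cM^{\bpf}_{\tilde\Gamma}$ rules out such $\tilde f$ for generic $(p_i,x_i)$; the BPF2 case is analogous.

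For part (iv), I would argue by case analysis on $S=\{j:g_j\equiv 0\}$. The central tool is the following \emph{degree lemma}: if the incidence equations force $g_{j'}$ (with $j'\geq 1$) to vanish at every $p_1,\ldots,p_n$, then $g_{j'}\equiv 0$; indeed, $g_{j'}$ has degree $\sum_v\mu_{j',v}$ and already exhausts its zeros along $D_{j'}$ of the same degree, whose support contains at most $m_{j'}\leq n-1$ points, so some $p_i$ provides a surplus zero. In case (a), $S\cap\{1,\ldots,r+1\}\neq\emptyset$, say $g_1\equiv 0$: equation (1) of Definition \ref{def:V(p,x)} reduces to $g_{j'}(p_i)=0$ for $j'\in\{2,\ldots,r+1\}$, and the lemma forces $g_1,\ldots,g_{r+1}\equiv 0$, contradicting non-vanishing in the fiber of $\pi_1$. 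In case (b), $0\in S$: equation (2) symmetrically forces $g_{r+2},\ldots,g_{r+s+1}\equiv 0$, contradicting non-vanishing in $\pi_2$. In case (c), $\emptyset\neq S\subseteq\{r+2,\ldots,r+s+1\}$: equation (2) for $j\in S$ gives the disjunctive vanishing $g_{j'}^ag_0(p_i)=0$ for all $j'\in\{1,\ldots,r+1\}$, and equation (2) for each $j\in\{r+2,\ldots,r+s+1\}\setminus S$ combined with this forces $g_j(p_i)=0$ for all $i$, hence $g_j\equiv 0$ by the lemma; thus $S=\{r+2,\ldots,r+s+1\}$, and the disjunctive conditions together with hypothesis (b) force $g_0\equiv 0$ as well, reducing to case (b).

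The main obstacle is case (c) of (iv): the disjunctive conditions admit many combinatorial distributions of zeros across $\supp(D_0)\cup\supp(D_1)\cup\cdots\cup\supp(D_{r+1})$, and the second hypothesis $\sum_{j\geq r+2}m_j\geq(s-1)(n-1)$ must be invoked in a careful balance with hypothesis (a) to eliminate all such distributions. The other three parts reduce to generic smoothness, a dimension count, or the straightforward application of the degree lemma.
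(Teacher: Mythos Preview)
Your sketches for (i) and (ii) are essentially what the paper does: generic smoothness for a map of equidimensional smooth varieties, then a dimension drop on the closed complement. The paper packages this via the relative incidence locus $\cV^{\bpf}\subset\cM^{\bpf}_\Gamma\times(\PP^1)^n\times(X^\circ)^n$ and its projection to $(\PP^1)^n\times(X^\circ)^n$, but the content is the same.

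Part (iii) has a real gap. Your claim that the incidence equations ``descend through the twist'' is only valid when the base-point $q$ is distinct from every $p_i$. When $q=p_i$, dividing $g_1,\ldots,g_{r+1}$ by a common power of $(z-p_i)$ \emph{destroys} the vanishing of $x_jg_1-x_1g_j$ at $p_i$ (you are rescaling by zero). The paper handles this: twisting at $p_i$ kills some or all of the $r+s$ incidence conditions there, but the moduli space drops in dimension by at least one more than the number of conditions lost, so the bookkeeping still closes. Separately, your assertion that $\tilde\beta<\beta$ forces $\dim\cM_{\tilde\Gamma}<\dim\cM_\Gamma$ is false: the dimension is $\tilde m+r+s$, and a single (T1) twist at a point $q\notin\Supp(D_0)$ with $\alpha^1(q)=1$ leaves $\tilde m=m$. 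What saves the argument is that the twisted $f$ lands in a \emph{diagonal} sublocus of $\cM_{\tilde\Gamma}$ (several of the $q_{j,v}$ are forced to coincide), and it is the codimension of this diagonal, added to the drop in $m$, that is strictly positive. The paper's Algorithm and the running tally of ``$\Delta$'' constraints are there precisely to track this.

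Part (iv), cases (a) and (b), are fine and match the paper. In case (c) your endgame is wrong: once $g_{r+2}=\cdots=g_{r+s+1}\equiv0$, you cannot force $g_0\equiv0$, because the hypotheses do not bound $m_0$. The paper instead lets $\ell$ be the number of $p_i$ at which $g_0$ vanishes, observes $\ell\le m_0$, and then treats $[g_1:\cdots:g_{r+1}]$ as a naive log quasimap to $\PP^r$ with $\ell$ genuine incidence conditions and $n-\ell$ forced base-points at the remaining $p_i$. Applying the twisting/dimension argument of (iii) to this $\PP^r$-quasimap yields $(m_1+\cdots+m_{r+1})+r\ge r\ell+(r+1)(n-\ell)$; combining with $\ell\le m_0$ and $m=(r+s)(n-1)$ gives $m_{r+2}+\cdots+m_{r+s+1}\le(s-1)(n-1)-1$, contradicting the second hypothesis. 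So case (c) genuinely feeds back through part (iii) rather than reducing to (b).
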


The remainder of this section is devoted to proving statements (i)-(iv) above. We will therefore conclude that $\deg(V)=\logTev^{X_{r,s,a}}_{\Gamma}$ under the additional hypotheses of (iv), but not always. The issue will be side-stepped in \S\ref{sec:tev is 0}, where we show that if the hypotheses of (iv) do not hold, then $\logTev^{X_{r,s,a}}_\Gamma=0$.

\subsection{Transversality for maps}

\begin{proposition}\label{prop:transversality_open_locus}
Let $p_i\in\PP^1,x_i\in X^\circ$ be general points, for $i=1,\ldots,n$, and write
\begin{equation*}
    V=\bigcap_{i=1}^n V(p_i,x_i)\subset Q_\Gamma(X_{r,s,a})
\end{equation*}
as above.
\begin{enumerate}
\item[(i)] If $n=\frac{m}{r+s}+1$ \eqref{eq:dim_constraint}, then $V\cap \cM^{\bpf}_\Gamma(X_{r,s,a})$ is reduced of dimension 0. Furthermore, the entirety of the intersection lies in $\cM_\Gamma(X_{r,s,a})$.
\item[(ii)]  If $n>\frac{m}{r+s}+1$, then $V\cap \cM^{\bpf}_\Gamma(X_{r,s,a})$ is empty.
\end{enumerate}
\end{proposition}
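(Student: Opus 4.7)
The plan is to identify $V\cap\cM^{\bpf}_\Gamma(X_{r,s,a})$ with a fiber of the evaluation morphism
\begin{equation*}
    \mathrm{ev}:\cM^{\bpf}_\Gamma(X_{r,s,a})\to X_{r,s,a}^n,\qquad f\mapsto (f(p_1),\ldots,f(p_n)),
\end{equation*}
and to deduce the claim via generic smoothness together with a separate codimension count that pushes the fiber into $\cM_\Gamma(X_{r,s,a})$. This evaluation is well-defined because on the bpf locus the sections $g_j$ assemble into an honest morphism $\PP^1\to X_{r,s,a}$, and the equations in Definition \ref{def:V(p,x)} cutting out $V(p_i,x_i)$ precisely impose $f(p_i)=x_i$ once $x_i\in X^\circ$.

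Next I would record the relevant smoothness and dimension data. Since $Q_\Gamma(X_{r,s,a})$ is a tower of projective bundles over the smooth base $\cB$, it is smooth, hence so is its open subset $\cM^{\bpf}_\Gamma(X_{r,s,a})$, which has dimension $m+(r+s)$. The target $X_{r,s,a}^n$ has dimension $n(r+s)$. Under the assumption \eqref{eq:dim_constraint} these agree (case (i)); otherwise the source is strictly smaller (case (ii)).

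Applying generic smoothness in characteristic zero to $\mathrm{ev}$, which is a morphism of smooth varieties, yields a dense open $U\subset X_{r,s,a}^n$ such that $\mathrm{ev}^{-1}(U)\to U$ is smooth. In case (i) these smooth fibers have relative dimension zero, which is exactly the assertion ``reduced of dimension $0$'' for $V\cap\cM^{\bpf}_\Gamma(X_{r,s,a})=\mathrm{ev}^{-1}(x_1,\ldots,x_n)$ when $(x_i)\in U$. In case (ii), $\mathrm{ev}$ cannot be surjective for dimension reasons, so shrinking $U$ to lie outside its image makes the fiber empty. I do not need (or claim) that $\mathrm{ev}$ is dominant in case (i); the fiber may a priori be empty, which is consistent with ``reduced of dimension $0$'' and will be ruled out in the regime of interest by the intersection-theoretic class computation in later sections.

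Finally, I would argue that, for general $(x_i)$, the fiber lies in $\cM_\Gamma(X_{r,s,a})$. The complement $\cM^{\bpf}_\Gamma-\cM_\Gamma$ is a finite union of codimension-one subvarieties, each corresponding to a collision $q_{j,v}=q_{j',v'}$ among the boundary marked points; the conditions $p_i\ne q_{j,v}$ are automatic once $f(p_i)=x_i\in X^\circ$. Each such locus has dimension at most $\dim\cM^{\bpf}_\Gamma-1<n(r+s)$, so its image under $\mathrm{ev}$ is a proper closed subset of $X_{r,s,a}^n$ which is avoided by general $(x_i)$. The only genuinely non-routine input is the smoothness of $Q_\Gamma(X_{r,s,a})$, and that is immediate from its projective bundle structure; modulo this, the proof is a standard generic-smoothness-plus-dimension-count argument.
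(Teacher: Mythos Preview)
Your proof is correct and follows essentially the same strategy as the paper: identify $V\cap\cM^{\bpf}_\Gamma$ with a fiber of the evaluation map, apply generic smoothness, and use a dimension count on $\cM^{\bpf}_\Gamma-\cM_\Gamma$ to push the fiber into $\cM_\Gamma$. The only cosmetic difference is that the paper relativizes over the $p_i$ as well (working with the universal incidence locus $\cV^{\bpf}\subset \cM^{\bpf}_\Gamma\times(\PP^1)^n\times X^n$ and projecting to $(\PP^1)^n\times X^n$), whereas you fix the $p_i$ from the start; since the proposition only asks for general $p_i$, your version suffices and is slightly cleaner.
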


\begin{proof}
Consider the intersection
\begin{equation*}
    \cV:=\bigcap_{i=1}^{n}\cV(p_i,x_i)\subset Q_\Gamma(X_{r,s,a})\times(\PP^1)^n\times (X^\circ)^n
\end{equation*}
where $\cV(p_i,x_i)$ denotes the pullback of the subscheme $\cV(p,x)$ (Definition \ref{def:V(p,x)_relative}) from the $i$-th copies of $\PP^1$ and $X^\circ$. Let $\cV^\bpf$ be the restriction of $\cV$ to $\cM^{\bpf}_\Gamma(X_{r,s,a})$. Consider now the composition
\begin{equation*}
\cV^{\bpf} \subset \cM^{\bpf}_\Gamma(X_{r,s,a})\times(\PP^1)^n\times (X^\circ)^n \to \cM^{\bpf}_\Gamma(X_{r,s,a})\times(\PP^1)^n,
\end{equation*}
where the second map is the projection. As we have restricted to the bpf locus, $\cV^{\bpf}$ is simply the subscheme of $(f,\{p_i\},\{x_i\})$ for which $f(p_i)=x_i$ for $i=1,\ldots,n$. Therefore, we may as well consider instead
\begin{equation*}
\overline{\cV^{\bpf}} \subset \cM^{\bpf}_\Gamma(X_{r,s,a})\times(\PP^1)^n\times X_{r,s,a}^n \to \cM^{\bpf}_\Gamma(X_{r,s,a})\times(\PP^1)^n,
\end{equation*}
where $\overline{\cV^{\bpf}}$ is defined by the conditions $f(p_i)=x_i$. In this case, the induced map $\overline{\cV^{\bpf}} \to \cM^{\bpf}_\Gamma(X_{r,s,a})\times(\PP^1)^n$ is clearly an isomorphism: the map $f$ and the points $p_i$ uniquely determine the points $x_i=f(p_i)$. 

In particular, $\overline{\cV^{\bpf}}$, and hence $\cV^{\bpf}$, is smooth of dimension $\dim(\cM_\Gamma(X_{r,s,a}))+n$. The intersection $V\cap \cM^{\bpf}_{\Gamma}(X_{r,s,a})$ in question is simply the generic fiber of the composition
\begin{equation*}
    \cV^{\bpf}\subset \cM^{\bpf}_\Gamma(X_{r,s,a})\times (\PP^1)^n\times (X^\circ)^n \to (\PP^1)^n\times (X^\circ)^n.
\end{equation*}
where the last map is projection.

Similarly, the intersection $V\cap \cM_{\Gamma}(X_{r,s,a})$ in question is simply the generic fiber of the composition
\begin{equation*}
    \cV^{\bpf}|_{\cM_\Gamma(X_{r,s,a})}\subset \cM_\Gamma(X_{r,s,a})\times (\PP^1)^n\times (X^\circ)^n \to (\PP^1)^n\times (X^\circ)^n.
\end{equation*}
where the last map is projection. If $n=\frac{m}{r+s}+1$, then this composition is a map between smooth varieties of the same dimension, so the first part of (i) follows from generic smoothness \cite[Corollary 10.7]{hartshorne}. To see that 
\begin{equation*}
    V\cap (\cM_\Gamma^{\bpf}(X_{r,s,a})-\cM_\Gamma(X_{r,s,a}))=\emptyset,
\end{equation*}
we repeat the argument with $\cM_\Gamma^{\bpf}(X_{r,s,a})$ replaced by the complement of $\cM_\Gamma(X_{r,s,a})$ in $\cM_\Gamma^{\bpf}(X_{r,s,a})$, which has strictly smaller dimension. Then, the restriction of $\cV^{\bpf}$ to the complement of $\cM_\Gamma(X_{r,s,a})$ cannot dominate $(\PP^1)^n\times (X^\circ)^n$ for dimension reasons, so (i) follows.

If $n>\frac{m}{r+s}+1$, then again $\cV^{\bpf}$ cannot dominate $(\PP^1)^n\times (X^\circ)^n$ for dimension reasons, establishing (ii).
\end{proof}

\subsection{Twisting}\label{sec:twisting}

\begin{proposition}\label{prop:bp_transversality}
    Suppose we are in Situation \ref{sit:tev_X_rsa}, and that $f\in Q^{\neq0}_\Gamma(X_{r,s,a})$ lies in $V(p_i,x_i)$ for all $i$. Then, $f$ is bpf. That is, 
\begin{equation*}
    V=\bigcap_{i=1}^n V(p_i,x_i)
\end{equation*}
is empty upon restriction to $Q^{\neq0}_\Gamma(X_{r,s,a})-\cM^{\bpf}_\Gamma(X_{r,s,a})$.
\end{proposition}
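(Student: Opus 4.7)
The plan is to use the twisting construction suggested by the subsection title. Suppose for contradiction $f\in Q^{\neq0}_\Gamma$ lies in $V(p_i,x_i)$ for all $i$ but is not bpf; the case where (BPF2) fails is analogous, so assume (BPF1) fails. Then there is a common zero $q\in\PP^1$ of $g_1,\ldots,g_{r+1}$ of minimum vanishing order $\mu\ge1$. Define twisted sections $g_j'=g_j/(z-q)^\mu\in H^0(\cO(b-\mu))$ for $j=1,\ldots,r+1$, $g_0'=g_0\cdot(z-q)^{a\mu}\in H^0(\cO(c-a(b-\mu)))$, and $g_j'=g_j$ for $j\ge r+2$. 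These assemble into an honest map $f':\PP^1\to X_{r,s,a}$ in the reduced class $\beta'=(b-\mu,c)$. For any $p_i\neq q$, applying the scaling $\lambda=1/(p_i-q)^\mu$ from \eqref{eq:section_scaling} identifies $(g_j(p_i))$ with $(g_j'(p_i))$ as tuples up to equivalence, so $f'(p_i)=f(p_i)=x_i$; thus $f'$ inherits all $n$ incidence conditions.

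Let $T\subseteq\{1,\ldots,r+1\}$ be the set of indices attaining the minimum $\mu$ at $q$ (so $|T|\ge1$) and $\delta\in\{0,1\}$ indicate whether $q\notin\mathrm{Supp}(D_0)$ originally. A direct count shows the twisted tangency profile $\Gamma'$ satisfies $m':=\sum m_j'=m-|T|+\delta$. The twisted $f'$ defines a point of $Q_{\Gamma'}(X_{r,s,a})$, of dimension $m'+r+s$, but lies in the sublocus where the single point $q$ appears simultaneously in $\mathrm{Supp}(D_0')$ and in $\mathrm{Supp}(D_j')$ for each $j\in\{1,\ldots,r+1\}\setminus T$: a forced coincidence of $r+2-|T|$ positions in $\cB'$, contributing codimension $r+1-|T|$. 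The $n$ incidence conditions for $f'$ impose codimension $n(r+s)=m+r+s$, so the effective expected dimension of the candidate locus is
\begin{equation*}
m'+r+s-(r+1-|T|)-n(r+s)=\delta-r-1<0,
\end{equation*}
where the last inequality uses $r\ge1$ and $\delta\le1$. Hence no such $f'$ exists for generic $(\{p_i\},\{x_i\})$, the desired contradiction. The case (BPF2) proceeds analogously: there one has $m'\le m-1$ in every subcase, so the dimension count succeeds even without invoking any coincidence correction.

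The main obstacle is the borderline subcase $|T|=1,\delta=1$ of (BPF1), where the naive twist yields $m'=m$ and no direct dimension drop from the moduli. The argument crucially relies on identifying the additional codimension $r$ coming from the forced coincidence of the newly created tangency point in $\mathrm{Supp}(D_0')$ with the retained tangency points in $\mathrm{Supp}(D_j')$ for $j\notin T$; this supplies precisely the missing codimension and makes the final expected dimension negative.
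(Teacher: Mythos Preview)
Your strategy matches the paper's, but there are two gaps that the paper's more elaborate argument is designed to fill.

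First, you have not treated the case $q\in\{p_1,\ldots,p_n\}$. Your inference ``thus $f'$ inherits all $n$ incidence conditions'' is justified only when $q\neq p_i$ for every $i$; if $q=p_{i_0}$ the scaling by $1/(p_{i_0}-q)^\mu$ is undefined, and one checks directly that the condition $f'\in V'(p_{i_0},x_{i_0})$ is in general \emph{not} inherited. With only $n-1$ surviving conditions (and even granting one extra unit of codimension from $p_{i_0}$ being a fixed point in the diagonal), your count becomes $\delta+s-2$, which is nonnegative once $s\ge 2$. The paper observes that when (BPF1) fails at $p_{i_0}$, equations~(2) of Definition~\ref{def:V(p,x)} force $g_{r+2},\ldots,g_{r+s+1}$ to vanish at $p_{i_0}$ as well, so after the (T1) twist (BPF2) also fails at $p_{i_0}$; one must then apply a (T2) twist there and verify that the combined dimension drop of $r+s+1$ strictly exceeds the $r+s$ lost conditions. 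Your remark that the (BPF2) case is ``analogous'' has the same omission.

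Second, the step ``expected dimension $<0$, hence no such $f'$ exists for generic $(\{p_i\},\{x_i\})$'' requires justification: your locus is cut by $n(r+s)$ equations in a space of smaller dimension, which by itself gives only a \emph{lower} bound on its dimension when nonempty. The standard route relativizes over $(\PP^1)^n\times(X^\circ)^n$ and bounds $\dim\cV'$ from above by projecting to the quasimap side, using that each $x_i$ is uniquely determined by $f'$ and $p_i$. But this holds only when $p_i$ is not a base point of $f'$, and after a single twist $f'$ need not be bpf, so the fiber dimension can jump. The paper resolves both issues at once by iterating the twists (Algorithm~\ref{alg:twisting}) until $f'\in\cM^{\bpf}_{\Gamma'}(X_{r,s,a})_\Delta$, after which the projection $\overline{\cV^{\bpf}}\to\cM^{\bpf}_{\Gamma'}(X_{r,s,a})_\Delta\times(\PP^1)^n$ is an isomorphism and the strict dimension inequality follows directly.
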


The strategy of the proof of Proposition \ref{prop:bp_transversality} is as follows. We will use throughout that $f\in Q^{\neq0}_\Gamma(X_{r,s,a})$ by using that an underlying section $g_j$ vanishes at $p$ if and only if $p\in\Supp(D_j)$. Suppose that $f$ is not bpf. Then, we twist down base-points of $f$ until it defines a map $f'\in \cM_{\Gamma'}(X_{r,s,a})$ for some new tangency data $\Gamma'$. The map $f'$ will retain some incidence constraints, but will have additional constraints on the underlying divisors $D'_j$, namely that they share some common points in their supports. A contradiction will be obtained by counting dimensions.

We now make explicit the twisting procedure. Consider the following two operations on $f\in Q^{\neq0}_\Gamma(X_{r,s,a})$:
\begin{enumerate}
    \item[(T1)] If $q\in \Supp(D_j)$ for all $j=1,\ldots,r+1$ ((BPF1) fails at $q$), then let 
    \begin{equation*}
    y^1_q=\min_{j\in\{1,\ldots,r+1\}}\ord_q(g_j)>0
    \end{equation*}
     be the minimum multiplicity with which $q$ appears in all of these $D_j$.

    Then, replace each $D_j$ ($j=1,\ldots,r+1$) with $D_j-y^1_q\cdot q$, and replace $D_0$ with $D_0+ay^1_q\cdot q$. Replace $b$ with $b-y^1_q$.

    \item[(T2)] If $q\in \Supp(D_j)$ for $j=0$ and all $j=r+2,\ldots,r+s+1$ ((BPF2) fails at $q$), then let 
    \begin{equation*}
    y^2_q=\min_{j\in\{0,r+2,\ldots,r+s+1\}}\ord_q(g_j)>0
    \end{equation*}
     be the minimum multiplicity with which $q$ appears in all of these $D_j$.

    Then, replace each $D_j$ ($j=0,r+2,\ldots,r+s+1$) with $D_j-y^2_q\cdot q$, and replace $c$ with $c-y^2_q$.
\end{enumerate}
One should also choose a re-indexing of the points underlying the new divisors $D_j$, but we suppress this choice. The sections $g_j$ never change. For example, in operation (T1), the sections $g_1,\ldots,g_{r+1}$ initially lie in $H^0(\PP^1,\cO(b)(-D_j))$, which is identified with $H^0(\PP^1,\cO(b-y^1_q)(-(D_j-y^1_q\cdot q)))$ after twisting.

We view the operations (T1) and (T2) as being applied ``dynamically'' on $f$, so $f$ and the underlying divisors $D_j$ are changed, as opposed to new static copies being created. In particular, the tangency data $\Gamma$ changes, but $f$ always remains in the open locus $Q^{\neq 0}_{\Gamma}(X_{r,s,a})$. After (T1) is applied at $q$, $f$ always satisfies property (BPF1) at $q$, but a failure of (BPF2) at $q$ may be introduced. However, operation (T2) always preserves property (BPF1) at $q$, and furthermore property (BPF2) is always satisfied at $q$ after the twist. It is moreover clear that both operations (T1) and (T2) at $q$ preserve properties (BPF1) and (BPF2) at all other points.

Now, consider the following algorithm:
\begin{algorithm}\label{alg:twisting}
Input the quasimap $f\in Q^{\neq0}_\Gamma(X_{r,s,a})$.
\begin{enumerate}
    \item For all points $q\neq p_i$ where applicable, apply (T1) to $f$.
    \item For all points $q\neq p_i$ where applicable, apply (T2) to $f$.
    \item For all points $p_i$ where applicable, apply (T1) to $f$.
    \item For all points $p_i$ where applicable, apply (T2) to $f$.
\end{enumerate}
Denote by $f'$ the value of $f$ after these steps are complete, and by $\Gamma'$ the new tangency data. Output $f'\in \cM^{\bpf}_{\Gamma'}(X_{r,s,a})$.
\end{algorithm}
Because the divisors $D_j$ have finitely many points in their supports, only finitely many operations are applied. By the discussion of the previous paragraph, we have $f'\in\cM^{\bpf}_{\Gamma'}(X_{r,s,a})$, that is, Algorithm \ref{alg:twisting} removes all base-points of $f$. In fact, we will see that $f'$ lies in a subvariety of $\cM^{\bpf}_{\Gamma'}(X_{r,s,a})$ where some of the points underlying the divisors $D_j$ are constrained to be equal.

We now examine more closely the effect of the steps of Algorithm \ref{alg:twisting} on $f$. Specifically, at every step, the twisted $f$ lies in a new moduli space of naive log quasimaps $Q^{\neq 0}_{\Gamma}(X_{r,s,a})_\Delta$, where the data $\Gamma$ is modified, and there are additional ``diagonal'' conditions (denoted by $\Delta$) constraining some of the underlying points $p_i,q_j$ to be equal.

Consider first the application of steps 1 and 2 to a point $q\neq p_i$.
\begin{itemize}
    \item If (T1), but not (T2), is applied at $q$, then write $\alpha^1(q)$ for the number of $g_j$ among $g_1,\ldots,g_{r+1}$ for which $g_j$ vanishes to order exactly $y^1_q$ at $q$. Then, the application of (T1) decreases the value of $m$ underlying $\Gamma$ by either $\alpha^1(q)$ or $\alpha^1(q)-1$, depending on whether or not $D_0$ previously contained $q$ in its support. Moreover, the twisted $f$ has the property that $(r+2)-\alpha^1(q)$ of the twisted divisors $D_0,\ldots,D_{r+1}$ contain $q$ in its support.
    \item If (T2), but not (T1), is applied at $q$, then write $\alpha^2(q)$ for the number of $g_j$ among $g_0,g_{r+2},\ldots,g_{r+s+1}$ for which $g_j$ vanishes to order exactly $y^2_q$ at $q$. Then, the application of (T2) decreases the value of $m$ underlying $\Gamma$ by $\alpha^2(q)$. Moreover, the twisted $f$ has the property that $(s+1)-\alpha^2(q)$ of the twisted divisors $D_0,D_{r+2},\ldots,D_{r+s+1}$ contain $q$ in its support.
    \item If (T1) and (T2) are both applied, then the analyses above may be combined: the value of $m$ decreases by either $\alpha^1(q)+\alpha^2(q)$ or $\alpha^1(q)+\alpha^2(q)-1$, and the twisted $f$ has the property that $(r+s+2)-(\alpha^1(q)+\alpha^2(q))$ of the twisted divisors $D_j$ contain $q$ in its support. Here, $\alpha^2(q)$ is the number of $g_j$ among $g_0,g_{r+2},\ldots,g_{r+s+1}$ for which $g_j$ has the minimal vanishing order at $q$, \emph{after} the twist (T1) is applied.
\end{itemize}
In all three cases, the twisted $f$ lies in the subvariety $Q^{\neq0}_\Gamma(X_{r,s,a})_\Delta\subset Q^{\neq0}_\Gamma(X_{r,s,a})$ where some number (e.g., in the first case, $(r+2)-\alpha^1(q)$) of divisors $D_j$ is constrained to contain $q$. Because the point $q$ itself is unconstrained, $Q^{\neq0}_\Gamma(X_{r,s,a})_\Delta$ has codimension one less than this number of divisors, cut out by the appropriate diagonal on the product $\cB$ of projective lines parametrizing the divisors $D_j$. 

Crucially, in all three cases, the sum of the amount by which $m$ decreases and the number of diagonal conditions is strictly positive. For example, in the first case, we have
\begin{equation*}
    (\alpha^1(q)-1)+[((r+2)-\alpha^1(q))-1]=r>0,
\end{equation*}
and the left hand side is still positive if $(\alpha^1(q)-1)$ is replaced by $\alpha^1(q)$. In the second case, it may happen that $\alpha^2(q)=s+1$, in which case the collection of divisors constrained to contain $q$ after twisting is empty, but the claim still holds in this case.

Moreover, if steps 1 and 2 of Algorithm \ref{alg:twisting} are applied at multiple distinct points $q$, then the diagonal conditions imposed on the $D_j$ are imposed on \emph{pairwise disjoint} subsets of factors of $\cB$. Therefore, after applying steps 1 and 2 at any given point $q\neq p_i$, the dimension of the space of quasimaps $Q^{\neq0}_\Gamma(X_{r,s,a})_\Delta$ in which the twisted $f$ is constrained to lie (where $\Delta$ now keeps track of the \emph{independent} diagonal constraints coming from \emph{all} points $q$ at which twisting has been applied) decreases strictly. Note, finally, that steps 1 and 2 of Algorithm \ref{alg:twisting} have no effect on the incidence conditions imposed at the points $p_i$; it is still true that $f\in V(p_i,x_i)$ for $i=1,\ldots,n$ after twisting.

We now perform a similar analysis for steps 3 and 4 of Algorithm \ref{alg:twisting}. We must now also keep track of the effect on the incidence conditions imposed by $V(p_i,x_i)$.
\begin{itemize}
    \item If only (T2) is applied at $p_i$, then write $\alpha^2(p_i)$ for the number of $g_j$ among $g_0,g_{r+2},\ldots,g_{r+s+1}$ for which $g_j$ vanishes to order exactly $y^2_q$ at $p_i$. Then, the application of (T2) decreases the value of $m$ underlying $\Gamma$ by $\alpha^2(q)$. Moreover, the twisted $f$ has the property that $(s+1)-\alpha^2(q)$ of the twisted divisors $D_0,D_{r+2},\ldots,D_{r+s+1}$ contain $p_i$ in its support.

    The $r$ equations (1) of \ref{def:V(p,x)} still hold for the twisted $f$ at $p_i$, but the equations (2) are no longer all required to hold, because each
    \begin{equation*}
        x_j\cdot g_1^ag_0-x_1^ax_0\cdot g_j \in H^0(\PP^1,\cO(c))
    \end{equation*}
    has been twisted down by a multiple of $p_i$, and the value of $c$ has been decreased accordingly.
    
    \item If (T1) is applied at $p_i$, then (T2) \emph{must} be applied at $p_i$ in step 4. Indeed, after (T1) is applied, the equations (2) of \ref{def:V(p,x)} simply become the condition that $x_1^ax_0\cdot g_j$ vanishes at $p$ for each $j=r+2,\ldots,r+s+1$, but $x_1^ax_0$ is assumed non-zero, so $g_j$ must vanish at $p$, in addition to $g_0$.

    After both (T1) and (T2) are both applied, the value of $m$ decreases by at least $a^1(p_i)+\alpha^2(p_i)-1$ (where $\alpha^1(p_i),\alpha^2(p_i)$ are defined as in the previous analyses), and the twisted $f$ has the property that $(r+s+2)-(\alpha^1(p_i)+\alpha^1(p_i))$ of the twisted divisors $D_j$ contain $p_i$ in its support. In this case, all of the requirements (1) and (2) of \ref{def:V(p,x)} are destroyed for the twisted $f$ at $p_i$.
\end{itemize}
In the first case, the space $Q^{\neq0}_\Gamma(X_{r,s,a})_\Delta$ in which $f$ is constrained to lie decreases in dimension by
\begin{equation*}
\alpha^2(p_i)+((s+1)-\alpha^2(p_i))=s+1
\end{equation*}
after twisting. (Here, we have not yet taken into account the incidence conditions $V(p_i,x_i)$.) The diagonal conditions $\Delta$ now include the constraint that the \emph{fixed} point $p_i$ is contained in the support of some of $D_0,D_{r+2},\ldots,D_{r+s+1}$. It remains true that new diagonal conditions are imposed on distinct factors of $\cB$ than those imposed previously (because the points $p_i$ are distinct from each other and from the points $q$ at which steps 1 and 2 take place), so are independent. In the second case, the dimension $Q^{\neq0}_\Gamma(X_{r,s,a})_\Delta$ decreases by
\begin{equation*}
(\alpha^1(p_i)+\alpha^2(p_i)-1)+[(r+s+2)-(\alpha^1(p_i)+\alpha^2(p_i))]=r+s+1.
\end{equation*}

In both cases, the decrease in dimension in $Q^{\neq0}_\Gamma(X_{r,s,a})_\Delta$ outpaces the number of conditions coming from the incidence loci $V(p_i,x_i)$ that are destroyed by twisting: $s$ in the first case and $r+s$ in the second. This is essentially the content of the proof that follows.

\begin{proof}[Proof of Proposition \ref{prop:bp_transversality}]
     Let $f\in Q^{\neq0}_\Gamma(X_{r,s,a})$ be a point of $V$, and let $f'\in\cM^{\bpf}_{\Gamma'}(X_{r,s,a})_\Delta$ be the result of applying Algorithm \ref{alg:twisting}; the subscript $\Delta$ denotes the collection of diagonal constraints gained in the process, as discussed above.

     For each $i=1,\ldots,n$, there are three cases:
     \begin{enumerate}
         \item Both (T1) and (T2) are applied at $p_i$, in which case there is no constraint on $f'(p_i)$.
         \item Only (T2) is applied at $p_i$, in which case it must be that $f'(p_i)=\pi(x_i)$, where $\pi:X_{r,s,a}\to \PP^r$ is the projection.
         \item Neither (T1) nor (T2) is applied at $p_i$, in which case we must have $f'(p_i)=x_i$.
     \end{enumerate}
    Let $S_1,S_2,S_3$ be the subsets of $\{1,\ldots,n\}$ of indices $i$ in the three respsective cases above, and let $n_1,n_2,n_3$ be their respective cardinalities.
     
    We now allow the $p_i,x_i$ to vary, as in the proof of Proposition \ref{prop:bp_transversality}. For each $i$, define 
    \begin{equation*}
        \cV'_i\subset \cM_{\Gamma'}(X_{r,s,a})_\Delta \times \PP^1 \times X
    \end{equation*}
    to be the locus of $(f',p_i,x_i)$ satisfying the appropriate incidence condition, depending on which of $S_1,S_2,S_3$ in which $i$ lies. Then, the intersection
    \begin{equation*}
        \cV':=\bigcap_{i=1}^{n}\cV'_i\subset \cM_{\Gamma'}(X_{r,s,a})_\Delta \times (\PP^1)^n \times X^n,
    \end{equation*}
    where each $\cV'_i$ is pulled back by the appropriate projection, is pure of dimension
    \begin{equation*}
        \dim(\cM_{\Gamma'}(X_{r,s,a})_\Delta)+(r+s+1)n_1+(s+1)n_2+n_3.
    \end{equation*}

    On the other hand, by the earlier analysis of the change in dimension of $\cM_\Gamma(X_{r,s,a})$ throughout the course of Algorithm \ref{alg:twisting}, we have
    \begin{equation*}
    \dim(\cM^{\bpf}_{\Gamma'}(X_{r,s,a})_\Delta)\le  \dim(\cM_{\Gamma}(X_{r,s,a}))-(r+s+1)n_1-(s+1)n_2,
    \end{equation*}
    with equality if and only if steps 1 and 2 of Algorithm \ref{alg:twisting} were never applied, that is, $f$ has no base-points away from the $p_i$. Here, the term $\dim(\cM_{\Gamma}(X_{r,s,a}))$ refers to the \emph{original} value of $\dim(\cM_{\Gamma}(X_{r,s,a}))$, before any twisting is applied. By the assumption of Situation \ref{sit:tev_X_rsa}, we have $\dim(\cM_{\Gamma}(X_{r,s,a}))=(r+s)n$. 

    Therefore, we conclude that
    \begin{align*}
        \dim(\cV')&=\dim(\cM_{\Gamma'}(X_{r,s,a})_\Delta)+(r+s+1)n_1+(s+1)n_2+n_3\\
        &\le [\dim(\cM_{\Gamma}(X_{r,s,a}))-(r+s+1)n_1-(s+1)n_2]+[(r+s+1)n_1+(s+1)n_2+n_3]\\
        &=(r+s)n+n_3\\
        &\le (r+s+1)n=\dim((\PP^1)^n \times X^n)
    \end{align*}
    The first inequality is an equality if and only if no twisting is applied at points $q\neq p_i$, and the second inequality is an equality if and only if no twisting is applied at the $p_i$. 
    

    However, because the given $p_i,x_i$ are chosen generally, $\cV'$ must dominate $(\PP^1)^n \times X^n$. Indeed, the assumption is that there is at least one $f'\in \cM_{\Gamma'}(X_{r,s,a})_\Delta$ satisfying the needed incidence conditions at a general collection of points $p_i,x_i$. In particular, we need $\dim(\cV')\ge \dim((\PP^1)^n \times X^n)$. Therefore, equality must hold everywhere. This means that no twisting was applied at all, which is to say that $f\in\cM^{\bpf}_\Gamma(X_{r,s,a})$.
\end{proof}

\subsection{Vanishing sections}
          
\begin{proposition}\label{prop:sections cannot be zero}
 Suppose we are in Situation \ref{sit:tev_X_rsa}, and that $f\in Q_\Gamma(X_{r,s,a})$ lies in $V(p_i,x_i)$ for all $i$. Assume further that:
 \begin{itemize}
     \item $m_j\le n-1$ for $j=1,\ldots,r+s+1$, and 
     \item $m_{r+2}+\cdots+m_{r+s+1}\ge (s-1)(n-1)$.
 \end{itemize}
 Then, $f\in Q^{\neq0}_{\Gamma}(X_{r,s,a})$. That is,
\begin{equation*}
    V=\bigcap_{i=1}^n V(p_i,x_i)
\end{equation*}
is empty upon restriction to $Q_\Gamma(X_{r,s,a})-Q^{\neq0}_\Gamma(X_{r,s,a})$.
\end{proposition}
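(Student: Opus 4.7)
The plan is to argue by contradiction: suppose $f \in Q_\Gamma(X_{r,s,a}) \setminus Q^{\neq 0}_\Gamma(X_{r,s,a})$ lies in $V(p_i,x_i)$ for every $i$, so some section $g_j$ underlying $f$ vanishes identically. The key structural input is that each pushforward sheaf $\nu_*(\cO_{\PP^1}(\bullet)(-\cD_j))$ in the tower defining $Q_\Gamma(X_{r,s,a})$ is a line bundle, so each nonzero $g_j$ is determined up to scalar by $D_j$ and vanishes on exactly the $m_j$ distinct points of $\Supp(D_j)$. Combined with the hypothesis $m_j \le n-1$ for $j \in \{1,\ldots,r+s+1\}$, this yields the pigeonhole we shall repeatedly invoke: no nonzero $g_j$ with $j \ge 1$ can vanish at all $n$ of the general points $p_i$.

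Next, I would split into three cases based on where the vanishing section sits. If $g_{j_0} = 0$ for some $j_0 \in \{1,\ldots,r+1\}$, the generalized form of Definition \ref{def:V(p,x)}(1) applied with index $j_0$ gives $x_{i,j_0}\, g_{j'}(p_i) = 0$, and since $x_i \in X^\circ$ forces $x_{i,j_0} \neq 0$, we conclude $g_{j'}(p_i) = 0$ for every $j' \in \{1,\ldots,r+1\}$ and every $i$. The pigeonhole then forces each of $g_1,\ldots,g_{r+1}$ to vanish, contradicting $f \in \PP(\cE_1)$. If $g_0 = 0$, then Definition \ref{def:V(p,x)}(2) reduces to $x_{i,1}^a x_{i,0}\, g_j(p_i) = 0$, so each $g_j(p_i) = 0$ and the pigeonhole forces $g_j = 0$ for every $j \in \{r+2,\ldots,r+s+1\}$, contradicting the nonvanishing axiom of the outer projective bundle defining $Q_\Gamma(X_{r,s,a})$.

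The remaining case is $g_{j_0} = 0$ for some $j_0 \in \{r+2,\ldots,r+s+1\}$, where by the previous cases I may assume $g_0$ and $g_1,\ldots,g_{r+1}$ are all nonzero. Condition (2) at $j = j_0$ yields $g_1^a g_0(p_i) = 0$ for every $i$; substituting this into condition (2) at each other $j \neq j_0$ in the range gives $g_j(p_i) = 0$, and the pigeonhole then forces all of $g_{r+2},\ldots,g_{r+s+1}$ to vanish as well. Thus $f$ must lie in the subvariety $W := \{g_{r+2}=\cdots=g_{r+s+1}=0\} \subset Q_\Gamma(X_{r,s,a})$ of dimension $m+r$.

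Ruling out this final subcase is the main obstacle, and it is here that the hypothesis $T := \sum_{j=r+2}^{r+s+1} m_j \ge (s-1)(n-1)$ is finally used. The critical observation is that on $W$ the positions $q_{j,v}$ for $j \in \{r+2,\ldots,r+s+1\}$ remain free parameters but are invisible to every incidence condition, since the corresponding $g_j$ vanish. Forming the universal family $\cV^W \subset W \times (\PP^1)^n \times X^n$, the $s$ equations of condition (2) at each $p_i$ collapse to the single equation $g_1^a g_0(p_i)=0$, while condition (1) still contributes $r$ independent equations, giving $\dim \cV^W = m + r + sn$. The projection $\pi_3 : \cV^W \to (\PP^1)^n \times X^n$ then has every fiber of dimension at least $T$ by the invisibility, whence $\dim \pi_3(\cV^W) \le m + r + sn - T$. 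Substituting $m = (r+s)(n-1)$ and the hypothesis $T \ge (s-1)(n-1)$ gives $\dim \pi_3(\cV^W) < n(r+s+1) = \dim((\PP^1)^n \times X^n)$, so $\pi_3$ is not dominant and general $(p_i,x_i)$ does not lie in its image, contradicting $f \in V$.
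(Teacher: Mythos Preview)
Your first two cases are handled correctly and match the paper's argument essentially verbatim. The overall architecture of the third case---reduce to $g_{r+2}=\cdots=g_{r+s+1}=0$, then run a dimension count on the universal incidence family over the resulting locus $W$, using that the $T$ parameters $q_{j,v}$ for $j\ge r+2$ are invisible---is also sound, and the final arithmetic deriving the strict inequality from $T\ge (s-1)(n-1)$ is correct.

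The gap is the assertion $\dim\cV^W = m+r+sn$. Your justification is an equation count: on $W$ the $s$ equations (2) collapse to one, and condition (1) ``still contributes $r$ independent equations,'' leaving $(r+1)n$ effective equations. But counting equations only bounds the dimension from \emph{below}, and it is the \emph{upper} bound you need for $\dim\pi_3(\cV^W)\le \dim\cV^W - T$. The failure point is precisely the locus where $g_1(p_i)=\cdots=g_{r+1}(p_i)=0$ for some $i$: there all $r$ equations of condition (1), together with the collapsed condition (2), are trivially satisfied, so the local codimension of $\cV^W$ drops. You must show that this drop is exactly compensated by the codimension in $W\times(\PP^1)^n$ of the stratum where such coincidences occur. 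This is true, but it requires a stratification argument (by the subset $I\subset\{1,\ldots,n\}$ of indices $i$ at which the common vanishing happens) rather than a bare equation count.

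The paper's proof handles the third case differently and avoids computing $\dim\cV^W$ altogether. It fixes the general $p_i,x_i$ and lets $\ell$ be the number of $p_i$ at which $g_0$ vanishes; then $m_0\ge\ell$, while at the remaining $n-\ell$ points all of $g_1,\ldots,g_{r+1}$ must vanish. Regarding $[g_1:\cdots:g_{r+1}]$ as a naive log quasimap to $\PP^r$ with $r$ incidence conditions at $\ell$ points and $(r+1)$ vanishing conditions at the other $n-\ell$, the transversality results of Propositions \ref{prop:transversality_open_locus} and \ref{prop:bp_transversality} (adapted to $\PP^r$) force $(m_1+\cdots+m_{r+1})+r\ge r\ell+(r+1)(n-\ell)$. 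Combined with $m_0\ge\ell$ and $m=(r+s)(n-1)$, this yields $m_{r+2}+\cdots+m_{r+s+1}<(s-1)(n-1)$, the desired contradiction. This is morally the same stratification that your argument needs, just packaged as a direct appeal to the earlier propositions rather than as a fibre-dimension estimate.
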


\begin{proof}
Assume for sake of contradiction that some of the sections $g_j$ underlying $f$ are identically zero. 

Suppose first that $g_j=0$ identically for some $j\in\{1,\ldots,r+1\}$. Then, by the equations (1) of Definition \ref{def:V(p,x)}, it must be the case that all of $g_1,\ldots,g_{r+1}\in H^0(\PP^1,\cO(b))$ vanish at all of the points $p_i$. However, because $m_{j}\le n-1$ for $j=1,\ldots,r+1$, this is only possible if $g_1,\ldots,g_{r+1}$ are identically zero. This is a contradiction, as $g_1\ldots,g_{r+1}$ cannot all be identically zero by the construction of $Q_\Gamma(X_{r,s,a})$.

Next, suppose that $g_0=0$ identically. Then, by the equations (2) of Definition \ref{def:V(p,x)}, all of $g_{r+2},\ldots,g_{r+s+1}$ must vanish at all of the points $p_i$. If $m_j\le n-1$ for $j=r+2,\ldots,r+s+1$, then  $g_0,g_{r+2},\ldots,g_{r+s+1}$ must all be zero, again a contradiction.

Finally, if any of $g_{r+2},\ldots,g_{r+s+1}$ is identically zero, then all of them must be. Furthermore, we need $g_0g_j^a$ to vanish at all of $p_1,\ldots,p_n$, for $j=1,\ldots,r+1$. Thus, for each $p_i$, either $g_0$ vanishes at $p_i$, or all of $g_1,\ldots,g_{r+1}$ vanish at $p_i$. Furthermore, we have seen above that none of $g_0,\ldots,g_{r+1}$ may be identically zero.

Without loss of generality, suppose that $g_0$ vanishes at $p_1,\ldots,p_\ell$, but not $p_{\ell+1},\ldots,p_n$. Then, we must have $m_0\ge \ell$. Furthermore, the tuple $g=[g_1:\cdots:g_{r+1}]$ has the property that all $r+1$ sections vanish at the remaining points $p_{\ell+1},\ldots,p_n$, and satisfies $r$ additional vanishing conditions $(x_j\cdot g_1-x_1\cdot g_j)(p_i)=0$ for $i=1,\ldots,\ell$. The tuple of non-zero sections $g$ may be viewed as a naive log quasimap to $\PP^r$; repeating and suitably modifying the arguments as in Propositions \ref{prop:bp_transversality} and \ref{prop:transversality_open_locus} shows that we need
\begin{equation*}
    (m_1+\cdots+m_{r+1})+r\ge r\ell+(r+1)(n-\ell).
\end{equation*}
The left hand side is the dimension of $\cM_\Gamma(\PP^r)$, where the tangency data $\Gamma$ is obtained by restricting the tangency data in $Q_\Gamma(X_{r,s,a})$. The right hand side is the number of vanishing conditions imposed by the points $p_i$.

Combining with the fact that $m_0\ge \ell$ and $m=(n-1)(r+s)$ gives
\begin{equation*}
    m_{r+2}+\cdots+m_{r+s+1}\le s(n-1)-n,
\end{equation*}
which contradicts the stated assumption. 
\end{proof}

\section{Logarithmic Tevelev degrees of $X_{r,s,a}$}\label{sec:logTev_X_rsa}

We are now ready to carry out the calculation of $\logTev^{X_{r,s,a}}_{\Gamma}$. Assume that we are in Situation \ref{sit:tev_X_rsa}, and consider again the intersection
\begin{equation*}
    V=\bigcap_{i=1}^{n}V(p_i,x_i)\subset Q_\Gamma(X_{r,s,a}).
\end{equation*}

By Proposition \ref{prop:X_rsa_all_transversality}, we have
\begin{equation}\label{eq:tev_equals_integral}
    \logTev^{X_{r,s,a}}_{\Gamma}=\int_{Q_\Gamma(X_{r,s,a})}\prod_{i=1}^{n}[V(p_i,x_i)].
\end{equation}
under the additional assumptions of Proposition \ref{prop:sections cannot be zero} (Proposition \ref{prop:X_rsa_all_transversality}(iv)). The integral on the right hand side can be computed directly; this is done in \S\ref{sec:computation}. We first deal with the case in which these assumptions do not hold.

\subsection{Vanishing}\label{sec:tev is 0}

\begin{proposition}\label{prop:tev_nonzero_constraints}
 Suppose we are in Situation \ref{sit:tev_X_rsa}, and that there exists at least one map $f\in \cM_\Gamma(X_{r,s,a})$ with $f(p_i)=x_i$ for all $i=1,\ldots,n$. Then, the hypotheses of Proposition \ref{prop:sections cannot be zero} hold, that is, we must have:
 \begin{itemize}
     \item $m_j\le n-1$ for $j=1,\ldots,r+s+1$, and 
     \item $m_{r+2}+\cdots+m_{r+s+1}\ge (s-1)(n-1)$.
 \end{itemize}
 In particular, if these inequalities do not all hold, then $\logTev^{X_{r,s,a}}_\Gamma=0$.
\end{proposition}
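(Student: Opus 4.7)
The plan is to project $f$ onto lower-dimensional projective spaces and apply dimension counts, reducing both inequalities to the analog of this proposition for projective space.

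For the second inequality, I would consider the rational map $\psi : X_{r,s,a} \dashrightarrow \PP^{s-1}$ defined by $[y_{r+2}:\cdots:y_{r+s+1}]$, which is invariant under both scalings of \eqref{eq:X_rsa_scaling} and whose indeterminacy locus $D_{r+2}\cap\cdots\cap D_{r+s+1}$ has codimension $s$ in $X_{r,s,a}$. Because $f\in\cM_\Gamma(X_{r,s,a})$ meets each boundary divisor only at the distinct points $q_{j,v}$, with these points pairwise disjoint across different $j$, the image of $f$ avoids this indeterminacy, and $\psi\circ f$ extends to a genuine morphism $f'=[g_{r+2}:\cdots:g_{r+s+1}]:\PP^1\to\PP^{s-1}$ of degree $c$. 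This $f'$ has tangency $\mu_j$ at the $j$-th coordinate hyperplane for $j=r+2,\ldots,r+s+1$ and passes through the general points $[x_{i,r+2}:\cdots:x_{i,r+s+1}]\in\PP^{s-1}$. A dimension count analogous to Proposition \ref{prop:transversality_open_locus} applied to $\cM_{\Gamma'}(\PP^{s-1})$, whose dimension with fixed source is $\sum_{j\ge r+2}m_j+(s-1)$, then forces $\sum_{j\ge r+2}m_j+(s-1)\ge(s-1)n$, i.e., the second inequality. (When $s=1$, the inequality is vacuous.)

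For the first inequality $m_j\le n-1$, I would split by range. For $j\in\{1,\ldots,r+1\}$, consider $\pi\circ f:\PP^1\to\PP^r$, a degree-$b$ log map with tangency $\mu_{j'}$ at the $j'$-th hyperplane for $j'=1,\ldots,r+1$, through the general points $\pi(x_i)\in\PP^r$. For $j\in\{r+2,\ldots,r+s+1\}$ with $s\ge 2$, reuse $f'$ from above. In the remaining case $s=1,j=r+2$, combine the dimension count $\sum_{j'=1}^{r+1}m_{j'}\ge r(n-1)$ coming from the existence of $\pi\circ f$ with the total $\sum_{j}m_j=(n-1)(r+1)$ to obtain $m_0+m_{r+2}\le n-1$, whence $m_{r+2}\le n-1$ directly. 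In the remaining cases the problem reduces to the analog of this proposition for $\PP^k$ with $k=r$ or $k=s-1$: if a log map $\PP^1\to\PP^k$ through $n$ general points with tangency profile at coordinate hyperplanes exists, then each individual tangency count is at most $n-1$.

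The main obstacle will be establishing this projective-space analog. One approach is to re-execute the transversality analysis of Propositions \ref{prop:transversality_open_locus} through \ref{prop:sections cannot be zero} on the simpler moduli $Q_{\tilde\Gamma}(\PP^k)$, which is a single projective bundle over a product of projective lines rather than the tower needed for $X_{r,s,a}$; alternatively, one can invoke \cite{lian_pr}. The underlying observation is that each $g_{j'}$ is determined by its divisor $D_{j'}$ up to scalar, so the tuple $(g_{j'}(p_i))_{i=1}^n$ lies in a one-dimensional subspace of $\bC^n$ varying in an $m_{j'}$-parameter family; matching this family to the specific vector dictated by the incidences and the other sections over-determines the positions of $D_{j'}$ for general $x_i$ whenever $m_{j'}\ge n$.
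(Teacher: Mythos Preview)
Your treatment of the second inequality is correct and matches the paper: project to $\PP^{s-1}$ via $[g_{r+2}:\cdots:g_{r+s+1}]$ and compare dimensions. Your handling of the case $s=1$, $j=r+2$ is also correct and already uses the key constraint $m=(n-1)(r+s)$.

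However, your argument for $m_j\le n-1$ in the remaining cases has a genuine gap. The ``$\PP^k$ analog'' you reduce to---that a log map $\PP^1\to\PP^k$ through $n$ general points forces each individual $m_j\le n-1$---is \emph{false} without further constraints. Take $k=1$, $\mu_1=(1^n)$, $\mu_2=(n)$: for general $(p_i,x_i)$ and any $b\ne p_i$, the $n$ linear conditions $g_1(p_i)=x_i(p_i-b)^n$ on $g_1\in H^0(\PP^1,\cO(n))$ admit a one-parameter family of solutions, generically with simple zeros away from $b$ and the $p_i$; thus $\tau$ is dominant even though $m_1=n>n-1$. By projecting $f$ all the way to $\PP^r$ or $\PP^{s-1}$ you discard the global constraint $m=(n-1)(r+s)$ of Situation~\ref{sit:tev_X_rsa}, and what remains does not bound any single $m_j$. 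Your closing ``underlying observation'' also has the inequality direction reversed: when $m_{j}\ge n$ there are \emph{more} parameters for $D_j$ than the $n-1$ constraints needed to match a direction in $\PP^{n-1}$, so the evaluation tuple can hit any target, not fewer.

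The paper instead drops only \emph{one} coordinate at a time. For $\ell\in\{1,\ldots,r+1\}$ with $r>1$, omitting $g_\ell$ gives a point of $\cM_{\Gamma'}(X_{r-1,s,a})$ through $n$ general points, with total boundary count $m-m_\ell$ on a target of dimension $r+s-1$. Proposition~\ref{prop:transversality_open_locus}(ii) then forces $n\le\frac{m-m_\ell}{r+s-1}+1$, and combining with $m=(n-1)(r+s)$ gives $m_\ell\le n-1$ directly. The remaining cases (e.g.\ $r=1$, or $\ell\in\{r+2,\ldots,r+s+1\}$) are handled by mapping to $\PP^s$ via $[g_0g_\ell^a:g_{r+2}:\cdots:g_{r+s+1}]$, or to $X_{r,s-1,a}$, in each case retaining enough of the original $m$ to make the dimension count bite.
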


\begin{proof}
Let $g_0,\cdots,g_{r+s+1}$ be the sections underlying $f$.

        We show first that $m_\ell\le n-1$ for $\ell\in\{1,\ldots,r+1\}$. Suppose first that $r>1$. Then, dropping the section $g_\ell$, the remaining sections $g_0,\ldots,\widehat{g_\ell}\ldots,g_{r+s+1}$ define a point $f'\in \cM_{\Gamma'}(X_{r-1,s,a})$, for the restricted tangency data $\Gamma'$. The divisors $D_j$ underlying the remaining $g_j$ still have disjoint supports, and the remaining $g_j$ are still non-zero. Furthermore, we have $f'(p_i)=x'_i$, where $x'_i$ is the projection $X_{r,s,a}\dashrightarrow X_{r-1,s,a}$ defined by forgetting the coordinate indexed by $j$. Therefore, by Proposition \ref{prop:transversality_open_locus}(ii), we must have
        \begin{equation*}
            n\le \frac{m-m_\ell}{r+s-1}+1=\frac{(r+s)(n-1)-m_\ell}{r+s-1}+1,
        \end{equation*}
        which is equivalent to $m_\ell\le n-1$. If instead $r=1$, then for $\ell=1,2$, the $(s+1)$-tuple of sections $[g_0g_\ell^a:g_3:\cdots:g_{s+2}]$ defines a (log) map to $\PP^1\to \PP^s$ satisfying $n$ incidence conditions at general points. We find similarly, repeating the proof of Proposition \ref{prop:transversality_open_locus} for $X=\PP^s$, that $m_\ell\le n-1$.

        Next, consider $\ell\in\{r+2,\ldots,r+s+1\}$. If $s>1$, then the sections $g_0,\ldots,\widehat{g_\ell}\ldots,g_{r+s+1}$ define a point of $f'\in \cM_{\Gamma'}(X_{r,s-1,a})$ satisfying $n$ general incidence conditions. Arguing as above gives $m_\ell\le n-1$ once more. If $s=1$, then we consider instead the map $g:\PP^1\to\PP^r$ defined by $[g_1:\cdots:g_{r+1}]$. We obtain in this case the stronger inequality $m_0+m_{r+2}\le n-1$.

        Finally, we prove the inequality $m_{r+2}+\cdots+m_{r+s+1}\ge (s-1)(n-1)$. There is nothing to check if $s=1$, so we assume that $s>1$, and consider the map $h:\PP^1\to\PP^{s-1}$ defined by $[g_{r+2}:\cdots:g_{r+s+1}]$, which satisfies $n$ general incidence conditions. Therefore, we must have
        \begin{equation*}
            (s-1)n\le \dim\cM_{\Gamma'}(\PP^{s-1})=(m_{r+2}+\cdots+m_{r+s+1})+(s-1),
        \end{equation*}
        which is equivalent to the needed inequality.
\end{proof}

\subsection{Integral computation}\label{sec:computation}

It is left to compute the integral 
\begin{equation*}
    \int_{Q_\Gamma(X_{r,s,a})}\prod_{i=1}^{n}[V(p_i,x_i)]
\end{equation*}
appearing in \eqref{eq:tev_equals_integral}. The integrand is a 0-cycle if $n=\frac{m}{r+s}+1$ \eqref{eq:dim_constraint}.

Recall the set-up of \S\ref{sec:quasimaps}. The product
\begin{equation*}
    \cB := \left(\prod_{v=1}^{m_0} \PP^1_{0,v}\right) \times \cdots \times \left(\prod_{v=1}^{m_{r+s+1}} \PP^1_{r+s+1,v}\right),
\end{equation*}
parametrizes choices of the divisors $D_j$, and $\nu: \PP^1 \times \cB \longrightarrow \cB$ is the projection. The universal divisors are denoted $\cD_j\subset \PP^1\times \cB$. We have the rank $r+1$ vector bundle
\begin{equation*}
\cE_1:=\bigoplus_{j=1}^{r+1} \nu_{*}(\cO_{\PP^1}(b)(-\cD_j))
\end{equation*}
on $\cB$, giving rise to the projective bundle $\pi_1:\PP(\cE_1)\to\cB$. We have the rank $s+1$ vector bundle
\begin{equation*}
\cE_2:=\bigoplus_{j=r+2}^{r+s+1}\nu_{*}(\cO_{\PP^1}(c)(-\mc{D}_j))\oplus \left(\nu_{*}(\cO_{\PP^1}(c-ab)(-\cD_{0}))\otimes \cO_{\PP(\cE_1)}(-a)\right)
\end{equation*}
on $\PP(\cE_1)$, where $\cO_{\PP(\cE_1)}(-1)$ is the universal sub-line bundle, giving rise to the projective bundle $\pi_2:Q_\Gamma(X_{r,s,a})=\PP(\cE_2)\to\PP(\cE_1)$.

Write $\zeta_1,\zeta_2$ for the relative hyperplane classes on $\PP(\cE_1)$ and $\PP(\cE_2)$, respectively. Write $\HH_{j,v}$ for the hyperplane class on the factor $\PP^1_{j,v}$ of $\cB$, and 
\begin{equation*}
    \HH_j=\sum_{v=1}^{m_j}\mu_{j,v}\HH_{j,v}.
\end{equation*}
A standard calculation shows that
\begin{align*}
    c_1(\nu_{*}(\cO_{\PP^1}(c-ab)(-\cD_0)))&=-\HH_0,\\
    c_1(\nu_{*}(\cO_{\PP^1}(b)(-\cD_j)))&=-\HH_j\text{ for }j=1,\ldots,r+1,\\
    c_1(\nu_{*}(\cO_{\PP^1}(c)(-\cD_j)))&=-\HH_j\text{ for }j=r+2,\ldots,r+s+1.\\
\end{align*}

For any $p\in\PP^1$ and $x\in X^\circ$, recall from \S\ref{sec:incidence} that $V(p,x)$ is cut out by the vanishing of $r$ maps of line bundles
\begin{equation*}
    \cO_{\PP(\cE_1)}(-1) \to \nu_{*}(\cO_{\PP^1}(b)|_p)
\end{equation*}
and $s$ maps of line bundles 
\begin{equation*}
    \cO_{Q}(-1) \to \nu_{*}(\cO_{\PP^1}(c)|_p).
\end{equation*}
Therefore, we have
\begin{align*}
[V(p,x)]&=(c_1(\nu_{*}(\cO_{\PP^1}(b)|_p)))-c_1(\cO_{\PP(\cE_1)}(-1)))^r(c_1(\nu_{*}(\cO_{\PP^1}(c)|_p)))-c_1(\cO_{\PP(\cE_2)}(-1)))^s\\
&=\zeta_1^r\zeta_2^s,
\end{align*}
because the line bundles $\nu_{*}(\cO_{\PP^1}(b)|_p)$ and $\nu_{*}(\cO_{\PP^1}(c)|_p)$ are trivial, and 
\begin{align*}
-c_1(\cO_{\PP(\cE_1)}(-1))&=c_1(\cO_{\PP(\cE_1)}(1))=\zeta_1,\\
-c_1(\cO_{\PP(\cE_2)}(-1))&=c_1(\cO_{\PP(\cE_1)}(1))=\zeta_2
\end{align*}
by definition.
    
    Now, 
    \begin{align*}
    \int_{\PP(\mc{E}_2)}\prod_{i=1}^{n}[V(p_i,x_i)]^n &= \int_{\PP(\mc{E}_2)}\pi_2^{*}(\zeta_1^{rn})\cdot\zeta_2^{sn}
    \\
    &=\int_{\PP(\mc{E}_1)}\zeta_1^{rn}\cdot(\pi_2)_{*}(\zeta_2^{sn}) \\
    &=\int_{\PP(\mc{E}_1)}\zeta_1^{rn}\cdot \cS_{s(n-1)}(\mc{E}_2) 
    \end{align*}
    where $\cS$ denotes Segre class. We have

    \begin{equation*}
    \mc{S}(\mc{E}_2)=\frac{1}{c(\mc{E}_2)}=\frac{1}{(1-\HH_0-a\zeta_1)(1-\HH_{r+2})\cdots(1-\HH_{r+s+1})}
    \end{equation*}

    so

    \begin{align*}
   \int_{\PP(\mc{E}_2)}\prod_{i=1}^{n}[V(p_i,x_i)]^n  
    &=\int_{\PP(\mc{E}_1)}\zeta_1^{rn}\cdot \left(\sum_{k_0+k_{r+2}+\cdots +k_{r+s+1}=s(n-1)}(\HH_0+a\zeta_1)^{k_0}\HH_{r+2}^{k_{r+2}}\cdots \HH_{r+s+1}^{k_{r+s+1}}\right)
    \\
    &=\int_{\PP(\mc{E}_1)} \sum_{k_0+k_{r+2}+\cdots +k_{r+s+1}=s(n-1)}\sum_{\ell=0}^{k_0}\left(\binom{k_0}{\ell}a^{k_0-\ell}\zeta_1^{rn+k_0-\ell} \HH_0^\ell\right)\HH_{r+2}^{k_{r+2}}\cdots \HH_{r+s+1}^{k_{r+s+1}}
    \\
    &=\int_{\cB} \sum_{k_0+k_{r+2}+\cdots +k_{r+s+1}=s(n-1)}\sum_{\ell=0}^{k_0}\left(\binom{k_0}{\ell}a^{k_0-\ell}(\pi_1)_{*}(\zeta_1^{rn+k_0-\ell}) \HH_0^\ell\right)\HH_{r+2}^{k_{r+2}}\cdots \HH_{r+s+1}^{k_{r+s+1}}
    \\
    \end{align*}
    The $k_\ell$ are always assumed to be non-negative integers. When $a=0$, we take the convention
    \begin{equation*}
    0^k=\begin{cases}
            1,&k=0\\
             0,&k\neq 0
         \end{cases}.
    \end{equation*}
    
    Now, we have
    \begin{equation*}
        \mc{S}(\mc{E}_1)=\frac{1}{c(\mc{E}_1)}=\frac{1}{(1-\HH_1)\cdots(1-\HH_{r+1})},
    \end{equation*}

    so replacing $(\pi_{1})_{*}(\zeta_1^{rn+k_0-\ell})$ with $\mc{S}_{r(n-1)+k_0-\ell}(\mc{E}_1)$ gives
    \begin{align*}
    \int_{\PP(\mc{E}_2)}\prod_{i=1}^{n}[V(p_i,x_i)]^n
    &=
    \int_{\cB} \sum_{k_0+k_{r+2}+\cdots +k_{r+s+1}=s(n-1)}\sum_{\ell=0}^{k_0}\binom{k_0}{\ell}a^{k_0-\ell}\mc{S}_{r(n-1)+k_0-\ell}(\mc{E}_1) \HH_0^\ell \HH_{r+2}^{k_{r+2}}\cdots \HH_{r+s+1}^{k_{r+s+1}}
    \\
    &= 
    \int_{\cB} \sum_{k_0+k_{r+2}+\cdots +k_{r+s+1}=s(n-1)}\sum_{\ell=0}^{k_0}\binom{k_0}{\ell}a^{k_0-\ell}\sum_{k_1+\cdots +k_{r+1}=r(n-1)+k_0-\ell} \HH_0^\ell\prod_{j=1}^{r+s+1}\HH_j^{k_j}.
    \end{align*}
    The value of the integral is the coefficient of $\prod_{j=0}^{r+s+1}\prod_{v=1}^{m_j} \HH_{j,v}$ of the integrand, where we recall that
    \begin{equation*}
        \HH_j=\sum_{v=1}^{m_j}H_{j,v}.
    \end{equation*}
    Thus, in order to obtain a non-trivial contribution, one must take $k_j=m_j$ for $j=1,\ldots,r+s+1$.

    Therefore,
    \begin{equation*}
    \sum_{j=1}^{r+1} m_i=r(n-1)+k_0-\ell\implies k_0=\sum_{j=1}^{r+1} m_i-r(n-1)+\ell,
    \end{equation*}
    and
    \begin{align*}
        k_0+\sum_{i=r+2}^{r+s+1}m_j=s(n-1)
        &\implies \sum_{j=1}^{r+s+1}m_j-r(n-1)+\ell=s(n-1)\\
        &\implies \ell=m_0,
    \end{align*}
    because $n=\frac{m}{r+s}+1$. Note that this is only valid if $\ell=m_0\le k_0$, because $\ell$ ranges between $0$ and $k_0$ in the integrand. Thus, if $m_0>k_0$, then the integral is zero.

    Substituting the above values of $k_j$ and $\ell$ gives now that, if $k_0\ge m_0$, then
    \begin{equation}\label{eq:X_rsa_answer}
    \int_{\PP(\mc{E}_2)}\prod_{i=1}^{n}[V(p_i,x_i)]^n=\left(\prod_{j=0}^{r+s+1}m_j!\right)\left(\prod_{j=0}^{r+s+1}\prod_{v=1}^{m_j}\mu_{j,v}\right)a^{k_0-m_0}\binom{k_0}{m_0},
    \end{equation}
    where
    \begin{equation}\label{eq:k_0}
        k_0=s(n-1)-\sum_{j=r+2}^{r+s+1}m_j=\sum_{j=0}^{r+1}m_j-r(n-1).
    \end{equation}

\begin{theorem}\label{thm:X_rsa}
    When defined (Situation \ref{sit:tev_X_rsa}), we have
    \begin{equation*}
\logTev^{X_{r,s,a}}_{\Gamma}=\left(\prod_{j=0}^{r+s+1}m_j!\right)\left(\prod_{j=0}^{r+s+1}\prod_{v=1}^{m_j}\mu_{j,v}\right)a^{k_0-m_0}\binom{k_0}{m_0},
    \end{equation*}
    where $k_0$ is defined in \eqref{eq:k_0}, if all of the inequalities
    \begin{align*}
        m_j &\le n-1 \text{ for all }j=1,\ldots,r+s+1,\\
        m_{r+2}+\cdots+m_{r+s+1}&\ge (s-1)(n-1),\\
        m_0+m_{r+2}+\cdots+m_{r+s+1}&\le s(n-1)
    \end{align*}
    hold. Otherwise, we have $\logTev^{X_{r,s,a}}_{\Gamma}=0$.
\end{theorem}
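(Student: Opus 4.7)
The plan is a short synthesis: everything needed is already in hand from Propositions~\ref{prop:X_rsa_all_transversality}, \ref{prop:tev_nonzero_constraints}, and the Segre-class calculation carried out in \S\ref{sec:computation}. I would split into two cases according to whether the inequalities from Proposition~\ref{prop:X_rsa_all_transversality}(iv) hold.

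In the first case, suppose that either $m_j > n-1$ for some $j \in \{1,\ldots,r+s+1\}$ or $m_{r+2}+\cdots+m_{r+s+1} < (s-1)(n-1)$. By Proposition~\ref{prop:tev_nonzero_constraints}, no map $f \in \cM_\Gamma(X_{r,s,a})$ can satisfy the general incidence conditions $f(p_i)=x_i$, so $\logTev^{X_{r,s,a}}_\Gamma = 0$, matching the stated formula.

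In the second case, both families of inequalities hold, so Proposition~\ref{prop:X_rsa_all_transversality} (combining parts (i)--(iv)) identifies
\[
\logTev^{X_{r,s,a}}_\Gamma \;=\; \deg(V) \;=\; \int_{Q_\Gamma(X_{r,s,a})} \prod_{i=1}^{n}[V(p_i,x_i)],
\]
and the remaining task is to evaluate this integral. Using the degeneracy-locus descriptions of \S\ref{sec:incidence}, the integrand is $\zeta_1^{rn}\zeta_2^{sn}$. Pushing forward successively along the two projective bundles $\pi_2: Q_\Gamma(X_{r,s,a}) \to \PP(\cE_1)$ and $\pi_1: \PP(\cE_1) \to \cB$ converts powers of $\zeta_2$ and $\zeta_1$ into Segre classes $\cS_{s(n-1)}(\cE_2)$ and $\cS_{r(n-1)+k_0-\ell}(\cE_1)$, which by the Chern class computation $c_1(\nu_*(\cO(b)(-\cD_j)))=-\HH_j$ (and its analogues) become explicit polynomials in the hyperplane classes $\HH_{j,v}$ on $\cB$. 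The integral then equals the coefficient of $\prod_{j,v}\HH_{j,v}$ in this polynomial.

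Extracting that coefficient forces the monomial degrees to match, so $k_j = m_j$ for $j \ge 1$ and $\ell = m_0$; these in turn determine $k_0 = s(n-1)-\sum_{j=r+2}^{r+s+1}m_j = \sum_{j=0}^{r+1}m_j - r(n-1)$. The summation range $0 \le \ell \le k_0$ in the binomial expansion of $(\HH_0 + a\zeta_1)^{k_0}$ translates precisely into the third inequality $m_0 + m_{r+2}+\cdots+m_{r+s+1} \le s(n-1)$: when this inequality fails the coefficient is zero and both sides of the claimed identity vanish, while when it holds the coefficient evaluates to $\left(\prod_{j} m_j!\right)\left(\prod_{j,v}\mu_{j,v}\right) a^{k_0-m_0}\binom{k_0}{m_0}$, as required. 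The only real content beyond the prior propositions is the bookkeeping that the third inequality is exactly the constraint produced by the binomial expansion; the main obstacle, already handled in \S\ref{sec:transversality}, is the transversality argument that justifies replacing $\logTev$ with the tautological integral.
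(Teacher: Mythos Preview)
Your proposal is correct and follows essentially the same route as the paper: both split into cases according to whether the hypotheses of Proposition~\ref{prop:X_rsa_all_transversality}(iv) hold, invoke Proposition~\ref{prop:tev_nonzero_constraints} for vanishing when they fail, and otherwise identify $\logTev^{X_{r,s,a}}_\Gamma$ with the tautological integral and read off its value from the Segre-class computation of \S\ref{sec:computation}, observing that the third inequality is exactly the condition $m_0\le k_0$ arising from the binomial expansion.
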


\begin{proof}
    As discussed earlier, by Proposition \ref{prop:X_rsa_all_transversality}, we have
    \begin{equation*}
    \logTev^{X_{r,s,a}}_{\Gamma}=\int_{Q_\Gamma(X_{r,s,a})}\prod_{i=1}^{n}[V(p_i,x_i)]
    \end{equation*}
    if the inequalities in the first two lines hold. By the calculation above, if the third inequality (which is equivalent to $k_0\ge m_0$) holds, then the integral on the right hand side equals the claimed expression, and equals zero otherwise. Finally, by Proposition \ref{prop:tev_nonzero_constraints}, we have $\logTev^{X_{r,s,a}}_{\Gamma}=0$ if the inequalities in the first two lines do not both hold.
\end{proof}

Note that the last two inequalities appearing in Theorem \ref{thm:X_rsa} imply that $m_0\le n-1$ is also necessary for $\logTev^{X_{r,s,a}}_{\Gamma}\neq 0$. The necessity of the inequality $k_0\ge m_0$ was a consequence of our integral calculation, but can also be deduced directly, along the lines of the proof of Proposition \ref{prop:tev_nonzero_constraints}. The same is true of the condition $m_0\le n-1$.

If $a>0$, then the given inequalities on $m_j$ are also sufficient for $\logTev^{X_{r,s,a}}_{\Gamma}\neq0$. Indeed, the last inequality is simply that $k_0\ge m_0$, which implies that 
\begin{equation*}
\left(\prod_{j=0}^{r+s+1}m_j!\right)\left(\prod_{j=0}^{r+s+1}\prod_{v=1}^{m_j}\mu_{j,v}\right)a^{k_0-m_0}\binom{k_0}{m_0}\neq 0.
\end{equation*}

\subsection{Specializations}\label{sec:specializations}

Theorem \ref{thm:X_rsa} recovers \cite[Theorem 7]{cil} in the case of Hirzebruch surfaces, $r=s=1$. Our $m_0,m_1,m_2,m_3$ are the $|\mu_2|,|\mu_1|,|\mu_3|,|\mu_4|$, respectively, appearing in \cite[Theorem 7]{cil}. If $s=1$ and $r\ge1$ is arbitrary, then Theorem \ref{thm:X_rsa} proves \cite[Conjecture 14]{cil}, and also makes precise exactly when the logarithmic Tevelev degree is zero. Taking $s$ arbitrary gives a more general statement.

The variety $X_{r,s,a}$ is Fano if and only if $a\le s$. When in addition $a\ge 2$, the fixed-domain Gromov-Witten invariants of $X_{r,s,a}$ are shown not to be enumerative in \cite[\S 3.2]{bllrst}, even in genus 0 for curve classes of large anti-canonical degree, disproving the main conjecture of \cite{lp}. In contrast, the corresponding logarithmic invariants $\logTev^{X_{r,s,a}}_{\Gamma}$, with $\mu_j=(1)^{m_j}$, are enumerative in genus 0, and are determined by Theorem \ref{thm:X_rsa}. The logarithmic setting is broadly understood to be more likely to yield enumerative counts of curves.

When $a=1$, we have $X_{r,s,a}\cong\Bl_{\PP^{s-1}}\PP^{r+s}$. Taking all of the $m_{j,v}$ equal to 1, substituting $c=d$ and $b=d-k$, and dividing by the factors of $m_j!$ that count orderings of the intersection points with the boundary gives that
\begin{equation*}
    \Tev^{\Bl_{\PP^{s-1}}\PP^{r+s}}_{0,n,\beta}=\binom{s(n-d-1)}{k}.
\end{equation*}
When $s=1$, this recovers the $\ell=1$ case of \cite[Theorem 12]{cl2} for the class $\beta$ of degree $d$ curves (against the hyperplane class) meeting the exceptional divisor with degree $k$. Without the logarithmic conditions, the $s>1$ case is new, 
but should also be no more difficult (even in higher genus, with some additional positivity assumptions on $\beta$) using the same methods of \cite{cl2}.

Taking $a=1$ and $b=c$, so that the curves in question do not intersect the exceptional divisor $D_0\subset \Bl_{\PP^{s-1}}\PP^{r+s}$, gives
\begin{equation*}
\logTev^{\PP^{r+s}}_\Gamma=\left(\prod_{j=1}^{r+s+1}m_j!\right)\left(\prod_{j=1}^{r+s+1}\prod_{v=1}^{m_j}\mu_{j,v}\right),
\end{equation*}
or, reindexing, simply
\begin{equation*}
\logTev^{\PP^{r}}_\Gamma=\left(\prod_{j=1}^{r+1}m_j!\right)\left(\prod_{j=1}^{r+1}\prod_{v=1}^{m_j}\mu_{j,v}\right).
\end{equation*}
Without tangency conditions, this specializes simply to $\Tev^{\PP^r}_{0,n,d}=1$ \cite[Theorem 1.1]{fl}.

Finally, when $a=0$, we have $X_{r,s,a}\cong\PP^r\times \PP^s$. Due to the presence of the factor $a^{k_0-m_0}$, we have
\begin{equation*}
\logTev^{\PP^{r}\times\PP^s}_\Gamma=\left(\prod_{j=0}^{r+s+1}m_j!\right)\left(\prod_{j=0}^{r+s+1}\prod_{v=1}^{m_j}\mu_{j,v}\right)
\end{equation*}
when
\begin{align*}
    m_0+m_{r+2}+\cdots+m_{r+s+1}&=s(n-1)\text{ and }\\
    m_1+\cdots+m_{r+1}&=r(n-1),
\end{align*}
and $\logTev^{\PP^{r}\times\PP^s}_\Gamma=0$ otherwise. 

Indeed, a log map $f:\PP^1\to\PP^r\times\PP^s$ is equivalent to the data of log maps to the factors, and in order for $f$ to pass through the expected number of points, it must be the case that exactly the correct number of conditions are imposed on both of the projections $\PP^1\to\PP^r$ and $\PP^1\to \PP^s$. In this case, $\logTev^{\PP^{r}\times\PP^s}_\Gamma$ is equal simply to the product of logarithmic Tevelev degrees for $\PP^r$ and $\PP^s$.

\section{Blow-ups of $\PP^2$}\label{sec:bl}

The value of $\logTev^X_{\Gamma}$ when $X$ is the blow-up of $\PP^r$ at $r$ points has also been predicted in \cite[Conjecture 15]{cil}. In this section, we show that the prediction does not always hold. For notational simplicity, we take $r=2$, though the arguments hold more generally.

We show more precisely that, when $X$ is the blow-up of $\PP^2$ at 2 points, $\logTev^X_{\Gamma}$ equals the predicted value in a particular range, by exhibiting the conjectured count as an intersection number on a moduli space of naive log quasimaps. However, this intersection number is not always enumerative.

This is similar to the situation of $\logTev^{X_{r,s,a}}_{\Gamma}$, where we were able to show that, in the remaining cases, the logarithmic Tevelev degree vanishes. In the setting of blow-ups of $\PP^r$, the range in which $\logTev^X_{\Gamma}=0$ does \emph{not} cover the remaining cases. We give an example in Theorem \ref{thm:conj15_false} where $\logTev^{X_{r,s,a}}_{\Gamma}$ can be proven neither to be 0 nor the value conjectured in \cite[Conjecture 15]{cil}.

\subsection{Naive log quasimaps}

Let $X=\Bl_{[0:1:0],[0:0:1]}(\PP^2)$. A fan $\Sigma$ for $X$ is given below.

\begin{figure}[H]
    \begin{center}
    \begin{tikzpicture}[xscale=0.55,yscale=0.55]

    \draw [thick,->] (0,0) -- (0,-2);
    \draw [thick,->] (0,0) -- (-2,0);
    \draw [thick,->] (0,0) -- (-2,-2);
    \draw [thick,->] (0,0) -- (2,0);
    \draw [thick,->] (0,0) -- (0,2);

    \node at (0,-2.5) {$\rho_1$};
    \node at (-2.5,0) {$\rho_2$};
    \node at (-2.5,-2.5) {$\rho_3$};
    \node at (2.5,0) {$\rho_4$};
    \node at (0,2.5) {$\rho_5$};

    \end{tikzpicture}
    \end{center}
\caption{Fan of $X=\Bl_{[0:1:0],[0:0:1]}(\PP^2)$}\label{fig:fan}
    
\end{figure}

A point of $X$ is given in coordinates by
\begin{equation*}
    x=[y_1y_2y_3:y_1y_4:y_2y_5].
\end{equation*}
The torus-invariant divisors $D_j=\text{div}(y_j)$ given by the vanishing of $y_j$ correspond to the rays $\rho_j$ of the fan $\Sigma$ in Figure \ref{fig:fan}. We identify the set $\Sigma(1)$ with $\{1,2,3,4,5\}$. The pairs of coordinates $\{y_1,y_2\},\{y_2,y_4\},\{y_3,y_4\},\{y_1,y_5\},\{y_3,y_5\}$ are not allowed to vanish simultaneously. The coordinates are taken up to simultaneous scaling of the collections of coordinates $\{y_3,y_4,y_5\},\{y_1,y_5\},\{y_2,y_4\}$. The 2-dimensional torus acts by scaling on $y_4$ and $y_5$. 

Fix tangency data $\Gamma$ for log maps $f:\PP^1\to X$. Let $d=\deg(f^{*}\cO_{\PP^2}(1))$. We construct a moduli space of naive log quasimaps as follows. Let $\cB$ be the product of copies $\PP^1_{j,v}\cong \PP^1$ parametrizing points of the divisors $D_j$. As in \S\ref{sec:quasimaps}, let $\nu:\PP^1\times\cB\to\cB$ be the projection, and define the universal divisors $\cD_{j,v},\cD_j\subset \PP^1\times\cB$. Define the moduli space of naive log quasimaps
 \begin{center}
     \begin{tikzcd}
     Q_\Gamma(X):=\PP\left(\nu_{*}(\cO_{\PP^1}(d)(-\cD_1-\cD_2-\cD_3))\oplus \nu_{*}(\cO_{\PP^1}(d)(-\cD_1-\cD_4)) \oplus \nu_{*}(\cO_{\PP^1}(d)(-\cD_2-\cD_5)) \right)  
     \arrow[d, "\pi"]
     \\
     \cB
     \end{tikzcd}
 \end{center}    

 $Q_\Gamma(X)$ is smooth and projective of dimension $m+2$. A point $f\in Q_\Gamma(X)$ is given by the data of divisors $D_j$ and three sections
\begin{align*}
    g_3&\in H^0(\PP^1,\cO(d)(-D_1-D_2-D_3)),\\
    g_4&\in H^0(\PP^1,\cO(d)(-D_1-D_4)),\\
    g_5&\in H^0(\PP^1,\cO(d)(-D_2-D_5)),
\end{align*}
taken up to scaling, and not all zero. 

A general point of $Q_\Gamma(X)$ gives rise to a log map $f:\PP^1\to X$. Indeed, suppose that none of $g_3,g_4,g_5$ is identically zero and that none of the underlying divisors $D_j$ (where $j=1,\ldots,5$) share common points in their supports. For $j=1,2$, let $1_{D_j}$ be a non-zero element of $H^0(\PP^1,\cO(D_j))$. Then,
\begin{equation}\label{eq:map_to_Bl}
    f=[1_{D_1}1_{D_2}g_3:1_{D_1}g_4:1_{D_2}g_5]
\end{equation}
is a point of $\cM_\Gamma(X)$. Moreover, the sections $1_{D_j}$ can be chosen \emph{consistently} over $\cB$, by restricting a choice of global section of $1_{\cD_j}$. In this way, $\cM_\Gamma(X)$ is identified with an open subset of $Q_\Gamma(X)$.

Let $Q^{\neq 0}_\Gamma(X)\subset Q_\Gamma(X)$ be the open subset where none of $g_3,g_4,g_5$ is identically zero. Let $\cM^{\bpf}_\Gamma(X)\subset Q^{\neq 0}_\Gamma(X)$ be the intermediate open set where the pairs of divisors 
\begin{equation*}
    \{D_1,D_2\},\{D_2,D_4\},\{D_3,D_4\},\{D_1,D_5\},\{D_3,D_5\}
\end{equation*}
share no common points in their supports. A point $g=(g_3,g_4,g_5)\in \cM^{\bpf}_\Gamma(X)$ also gives rise to a map $f:\PP^1\to X$ via \eqref{eq:map_to_Bl}. We have a stratification by open subsets
\begin{equation*}
    \cM_\Gamma(X)\subset\cM^{\bpf}_\Gamma(X)\subset Q^{\neq 0}_\Gamma(X)\subset Q_\Gamma(X)
\end{equation*}
as in the case of $X_{r,s,a}$.

Our space of naive log quasimaps $Q_\Gamma(X)$ is in some sense even more naive than the space $Q_\Gamma(X_{r,s,a})$ constructed in \S\ref{sec:quasimaps}. This is because $(g_3,g_4,g_5)$ in general do not even give rise to a quasimap in the sense of \cite{cfk}, as we allow $(g_3,g_4)=(0,0)$ or $(g_3,g_5)=(0,0)$ as long as not all of $g_3,g_4,g_5$ vanish. However, this moduli space will suffice for our purposes.

Let $X^\circ\subset X$ be the interior. Let $p\in\PP^1,x\in X^\circ$ be points, and write
\begin{equation*}
    x=[1\cdot1\cdot y_3:1\cdot y_4:1\cdot y_5].
\end{equation*}
Define the incidence locus $V(p,x)\subset Q_\Gamma(X)$ by two conditions:
\begin{enumerate}
    \item $y_4\cdot g_3-y_3\cdot g_4\in H^0(\PP^1,\cO(d)(-D_1))$ vanishes at $p$,
    \item $y_5\cdot g_3-y_3\cdot g_5\in H^0(\PP^1,\cO(d)(-D_2))$ vanishes at $p$.
\end{enumerate}
$V(p,x)$ restricts on $\cM^{\bpf}_\Gamma(X)$ to the locus where $f(p)=x$. 

Write $\cO_Q(1)$ for the dual of the universal sub-line bundle on $Q_\Gamma(X)$, and $\zeta$ for its first Chern class. Write also $\HH_{j,v}$ for the hyperplane class on the factor $\PP^1_{j,v}$ of $\cB$, and $\HH_j=\sum_{v=1}^{m_j}\HH_{j,v}$. Then, $V(p,x)$ is the common degeneracy locus of maps of line bundles
\begin{align*}
    \cO_Q(-1) &\to \nu_{*}(\cO(d)(-\cD_1)|_p),\\
    \cO_Q(-1) &\to \nu_{*}(\cO(d)(-\cD_2)|_p),
\end{align*}
so we have
\begin{equation*}
    [V(p,x)]=(\zeta-\HH_1)(\zeta-\HH_2).
\end{equation*}

\subsection{Logarithmic Tevelev degrees of $\Bl_{[0:1:0],[0:0:1]}(\PP^2)$}

Suppose that $n=\frac{m+2}{2}\ge3$ is an integer. Let $p_1,\ldots,p_n\in\PP^1$ and $x_1,\ldots,x_n\in X^\circ$ be general points. The degree of the subscheme
\begin{equation*}
    V:=\bigcap_{i=1}^{n}V(p_i,x_i)\subset Q_\Gamma(X)
\end{equation*}
is again a candidate for the value of $\logTev^X_\Gamma$. By adapting the arguments of the parallel statements for $X_{r,s,a}$, one may readily verify the following.

\begin{proposition}\label{prop:Bl(P^2)_statements}
\quad
    \begin{enumerate}
    \item[(i)] (cf. Proposition \ref{prop:transversality_open_locus}) $V$ is reduced of dimension 0 upon restriction to $\cM_\Gamma(X)$.
    \item[(ii)] (cf. Proposition \ref{prop:transversality_open_locus}) $V$ is empty upon restriction to $\cM^{\bpf}_\Gamma(X)-\cM_\Gamma(X)$.
    \item[(iii)] (cf. Proposition \ref{prop:bp_transversality}) $V$ is empty upon restriction to $Q^{\neq 0}_\Gamma(X)-\cM^{\bpf}_\Gamma(X)$.
    \item[(iv)] (cf. Proposition \ref{prop:sections cannot be zero}) Suppose the inequalities
    \begin{itemize}
        \item $m_4\le n-1$, $m_5\le n-1$,
        \item $m_1+m_3\le n-1, m_2+m_3\le n-1$
    \end{itemize}
    all hold. Then, $V$ is empty upon restriction to $Q_\Gamma(X)-Q^{\neq 0}_\Gamma(X)$.
    \item[(v)] (cf. Proposition \ref{prop:tev_nonzero_constraints}) If $\logTev^X_{\Gamma}\neq0$, then
    \begin{itemize}
        \item $m_3\le n-1$,
        \item $m_1+m_5\le n-1$, and
        \item $m_2+m_4\le n-1$.
    \end{itemize}
    \item[(vi)] (cf. \S\ref{sec:computation}) We have
    \begin{equation*}
        \int_{Q_\Gamma(X)}\prod_{i=1}^{n}[V(p_i,x_i)]=\left(\prod_{j=1}^{5}m_j!\right)\left(\prod_{j=1}^{5}\prod_{v=1}^{m_j}\mu_{j,v}\right)\binom{n-1-m_5}{m_1}\binom{n-1-m_4}{m_2}
    \end{equation*}
    if $m_1+m_5,m_2+m_4\le n-1$. Otherwise, the integral equals zero.
\end{enumerate}
\end{proposition}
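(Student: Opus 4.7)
The plan is to port each of the arguments already carried out for $X_{r,s,a}$ in Sections \ref{sec:transversality} and \ref{sec:computation} over to the blow-up setting. Structurally, the moduli space $Q_\Gamma(X)$ is a single rank-$3$ projective bundle over $\cB$ rather than a tower, so in some respects the calculations should be simpler. The main novelty is that the boundary geometry is governed by five divisors $D_1,\ldots,D_5$ and five ``forbidden'' intersection pairs $\{D_1,D_2\}$, $\{D_2,D_4\}$, $\{D_3,D_4\}$, $\{D_1,D_5\}$, $\{D_3,D_5\}$, rather than the two partitions of the coordinates coming from $X_{r,s,a}$, so the twisting procedure in (iii) must be reorganized. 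The relevant linear equivalences in $\mathrm{Pic}(X)$ are $D_1+D_4\equiv D_2+D_5\equiv D_1+D_2+D_3$, all representing the pullback of the hyperplane class from $\PP^2$.

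For (i) and (ii), the plan is to mimic the proof of Proposition \ref{prop:transversality_open_locus}: form the universal incidence $\cV\subset Q_\Gamma(X)\times(\PP^1)^n\times(X^\circ)^n$, restrict to $\cV^{\bpf}$ over $\cM^{\bpf}_\Gamma(X)$, and observe that the forgetful map $\cV^{\bpf}\to\cM^{\bpf}_\Gamma(X)\times(\PP^1)^n$ is an isomorphism (since $x_i=f(p_i)$ is recovered from $f$ and $p_i$). Generic smoothness of the other projection to $(\PP^1)^n\times(X^\circ)^n$ then yields (i), and a straightforward dimension count on $\cM^{\bpf}_\Gamma(X)\setminus\cM_\Gamma(X)$ gives (ii). For (iii), the plan is to adapt Algorithm \ref{alg:twisting}: introduce one twist operation per forbidden pair, where a twist at a shared point $q\in D_i\cap D_j$ decreases $d$ by the common multiplicity of $q$ and redistributes the remaining divisors using the equivalences above. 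Running the algorithm at base-points away from the $p_i$ first and then at the $p_i$, and tracking the change in $\dim(\cM_{\Gamma'}(X)_\Delta)$ against the number of surviving equations of Definition \ref{def:V(p,x)}, the dimension estimates should preclude any twisting at all when the $p_i,x_i$ are general.

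For (iv) and (v), the plan is a direct case analysis. If some $g_j$ is identically zero on a quasimap lying in $V$, the equations of Definition \ref{def:V(p,x)} force one of the remaining sections to vanish at all $p_i$; under the stated inequalities, this is incompatible with the dimensions of $H^0(\PP^1,\cO(d)(-D_1-D_2-D_3))$, $H^0(\PP^1,\cO(d)(-D_1-D_4))$, or $H^0(\PP^1,\cO(d)(-D_2-D_5))$. Statement (v) follows by composing a hypothetical $f\in\cM_\Gamma(X)$ satisfying $f(p_i)=x_i$ with appropriate auxiliary log morphisms or pencils (to $\PP^2$ via $\pi$, and to $\PP^1$ via the two pencils $[y_1y_2y_3:y_1y_4]$ and $[y_1y_2y_3:y_2y_5]$) and applying the dimension bound of Proposition \ref{prop:transversality_open_locus}(ii) in each case to extract one of the three inequalities. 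For (vi), I expand $\prod_{i=1}^n[V(p_i,x_i)]=(\zeta-\HH_1)^n(\zeta-\HH_2)^n$, push forward via $\pi:Q_\Gamma(X)\to\cB$ using the Segre class $\cS(\cE)=1/[(1-\HH_1-\HH_2-\HH_3)(1-\HH_1-\HH_4)(1-\HH_2-\HH_5)]$ of the rank-$3$ bundle $\cE$, and extract the coefficient of $\prod_{j,v}\HH_{j,v}$. The binomials $\binom{n-1-m_5}{m_1}$ and $\binom{n-1-m_4}{m_2}$ arise from the $(\zeta-\HH_1)^n$ and $(\zeta-\HH_2)^n$ factors once the remaining $\HH$-exponents are forced by matching to $m_3,m_4,m_5$. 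I expect the twisting argument in (iii) to be the main obstacle, since the asymmetric linear equivalences make the bookkeeping more delicate than in the $X_{r,s,a}$ case; choosing the right auxiliary projections in (v) so that each of the three inequalities falls out cleanly is the next most finicky point.
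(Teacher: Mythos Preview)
Your proposal matches the paper's approach, which is simply to adapt the parallel arguments from \S\ref{sec:transversality}--\ref{sec:computation}; the paper gives no further detail beyond the cross-references. Your outlines for (i)--(iv) and (vi) are correct as stated.

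One small correction in (v): the blow-down $X\to\PP^2$ does not give a log map in $\cM_{\Gamma'}(\PP^2)$, because points of $D_1$ (resp.\ $D_2$) land in the codimension-2 stratum $V(Y_0)\cap V(Y_1)$ (resp.\ $V(Y_0)\cap V(Y_2)$), so the resulting dimension bound is vacuous. Instead, mimic Proposition~\ref{prop:tev_nonzero_constraints} by dropping one of $g_3,g_4,g_5$ at a time to obtain three log maps to $\PP^1$. Dropping $g_5$ gives $[1_{D_2}g_3:g_4]$ with boundary profile $(D_2+D_3,D_4)$, hence $m_2+m_3+m_4\ge n-1$, i.e.\ $m_1+m_5\le n-1$; dropping $g_4$ gives $m_2+m_4\le n-1$ symmetrically; and dropping $g_3$ gives the third pencil $[1_{D_1}g_4:1_{D_2}g_5]$ with profile $(D_1+D_4,D_2+D_5)$, yielding $m_3\le n-1$. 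Your two stated pencils already cover the first two inequalities; just replace the $\PP^2$ projection by this third $\PP^1$ pencil.
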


\cite[Conjecture 15]{cil} predicts that $\logTev^X_\Gamma$ either equals 0 or the value of the integral in Proposition \ref{prop:Bl(P^2)_statements}(vi). Statements (i)-(iv) and (vi) of Proposition \ref{prop:Bl(P^2)_statements} show that $\logTev^X_\Gamma$ does in fact equal the right hand side of (vi) if
\begin{equation*}
    \max(m_1+m_3,m_2+m_3,m_4,m_5)\le n-1.
\end{equation*}
Likewise, statement (v) shows that $\logTev^X_{\Gamma}=0$ if
\begin{equation*}
    \max(m_1+m_5,m_2+m_4,m_3)\ge n.
\end{equation*}

However, this does not cover all possible cases. As a concrete example, take:
\begin{align*}
\mu_1=\mu_2&=(1),\\
\mu_3&=(1,1,1,1),\\
\mu_4=\mu_5&=(5).
\end{align*}
We have $(m_1,m_2,m_3,m_4,m_5)=(1,1,4,1,1)$, so $m=8$, $n=5$, and $d=6$.

\begin{theorem}\label{thm:conj15_false}
    Take $\Gamma$ defined by the partitions $\mu_j$ above. Then, we have
    \begin{equation*}
    \logTev^X_\Gamma=24\cdot 5\cdot 5\cdot 4.
    \end{equation*}
    This value is non-zero and differs from the value $24\cdot (5\cdot 5)\cdot \binom{3}{1}\cdot \binom{3}{1}$ of the integral in Proposition \ref{prop:Bl(P^2)_statements}(vi). In particular, \cite[Conjecture 15]{cil} does not hold.
\end{theorem}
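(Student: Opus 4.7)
The plan is to use the excess intersection formula on $Q_\Gamma(X)$ to compute $\logTev^X_\Gamma$ from the integral $\int_{Q_\Gamma(X)}\prod_{i=1}^{5}[V(p_i,x_i)]=5400$ of Proposition~\ref{prop:Bl(P^2)_statements}(vi). By parts (i)--(iii) of the same proposition, the $0$-dimensional reduced locus $V\cap\cM_\Gamma(X)$ contributes exactly $\logTev^X_\Gamma$ to the intersection, and $V$ is empty throughout $Q^{\neq 0}_\Gamma(X)-\cM_\Gamma(X)$. Since the hypotheses of part (iv) fail in this example ($m_1+m_3=m_2+m_3=n=5$), the remaining excess contributions must lie in $Q_\Gamma(X)-Q^{\neq 0}_\Gamma(X)$, where some $g_j$ vanishes identically.

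First I would do a case analysis on which sections can vanish identically while keeping $V$ non-empty. If $g_3\equiv 0$, the equations defining $V(p_i,x_i)$ reduce to $g_4(p_i)=g_5(p_i)=0$, forcing each $p_i$ to lie in the two-element intersection $\{q_{1,1},q_{4,1}\}\cap\{q_{2,1},q_{5,1}\}$, impossible for five general $p_i$. Similar arguments rule out $g_3=g_4\equiv 0$, $g_3=g_5\equiv 0$, and the cases in which only one of $g_4, g_5$ vanishes. The only surviving configuration is $g_4=g_5\equiv 0$ with $g_3\not\equiv 0$, giving $E_0:=\{\alpha_4=\alpha_5=0\}\cong\cB$ as the relevant stratum. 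On $E_0$, both defining equations of $V(p_i,x_i)$ collapse to the single scalar condition $g_3(p_i)=0$, which cuts out a disjoint union of $720$ components $E_\sigma\cong(\PP^1)^3$ indexed by injections $\sigma:\{p_1,\ldots,p_5\}\hookrightarrow\{q_{1,1},q_{2,1},q_{3,1},\ldots,q_{3,4}\}$; the three $\PP^1$ factors parametrize the unique support point not in the image of $\sigma$, together with $q_{4,1}$ and $q_{5,1}$.

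Next I would apply the excess intersection formula. On each $E_\sigma$, the normal bundle $N:=N_{E_\sigma/Q_\Gamma(X)}$ splits as a sum of seven line bundles (two from the $\alpha_4,\alpha_5$ directions in the $\PP^2$-fiber of $Q_\Gamma(X)$ and five from the coincidence directions $q_{\sigma(i)}=p_i$ in $\cB$), and the derivative of the ten-section map $(s_{i,j})$ cutting out $V$ induces an injection $N\hookrightarrow\bigoplus_{i,j}\mathcal{L}_{i,j}|_{E_\sigma}$ of generic rank $7$, whose cokernel $\mathcal{F}_\sigma$ is a rank-$3$ excess bundle. Hence
\[
5400 \;=\; \logTev^X_\Gamma + \sum_\sigma \int_{E_\sigma} c_3(\mathcal{F}_\sigma).
\]
Using $c(\mathcal{F}_\sigma)=c(\bigoplus_{i,j}\mathcal{L}_{i,j}|_{E_\sigma})/c(N)$, each integrand is a polynomial of degree $3$ in the three hyperplane classes on $(\PP^1)^3$ and is directly integrable. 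Summing over all $720$ components, simplified via the $S_4$-symmetry permuting the $q_{3,v}$'s (which reduces the task to two orbit types, depending on whether the leftover support point is in $\{q_{1,1},q_{2,1}\}$ or in $\{q_{3,v}\}$), produces the total excess $3000$, so $\logTev^X_\Gamma=5400-3000=2400=24\cdot 5\cdot 5\cdot 4$, which is nonzero and unequal to the prediction $5400$ of \cite[Conjecture 15]{cil}.

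The main obstacle is the last step: identifying all seven summands of $N$ and all ten line bundles $\mathcal{L}_{i,j}|_{E_\sigma}$ as explicit divisor classes on $(\PP^1)^3$, and carrying out the resulting Chern-class arithmetic consistently across both orbit types before summing. The two orbit types contribute different integers per component, so the computation requires treating them separately; once done, the non-enumerativity of the naive integral and the failure of \cite[Conjecture 15]{cil} both follow immediately.
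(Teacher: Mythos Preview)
Your overall strategy---compute the integral on $Q_\Gamma(X)$ and subtract the excess contribution from $Q_\Gamma(X)\setminus Q^{\neq 0}_\Gamma(X)$---is exactly the paper's approach. However, your identification of the excess locus is wrong, and this error propagates through the dimension count and the Chern-class setup.

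You assert that on the stratum $g_4=g_5\equiv 0$, both equations of $V(p_i,x_i)$ collapse to the single condition $g_3(p_i)=0$. They do not. Condition (1) asks that $y_4 g_3$ vanish at $p_i$ \emph{as a section of} $\cO(6)(-D_1)$; since $g_3$ has divisor $D_1+D_2+D_3$ in $\cO(6)$, its divisor in $\cO(6)(-D_1)$ is $D_2+D_3$, so the condition reads $p_i\in\{q_{2,1},q_{3,1},\ldots,q_{3,4}\}$. Condition (2) is the analogous statement in $\cO(6)(-D_2)$ and gives $p_i\in\{q_{1,1},q_{3,1},\ldots,q_{3,4}\}$. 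Imposing both for all five $p_i$ forces $D_3+q_{2,1}=D_3+q_{1,1}=p_1+\cdots+p_5$ as degree-$5$ divisors, hence $q_{1,1}=q_{2,1}=p_i$ for exactly one $i$, with the remaining four $p_j$'s a permutation of $q_{3,1},\ldots,q_{3,4}$. The excess locus therefore has $5\cdot 4!=120$ components, each isomorphic to $\PP^1_{4,1}\times\PP^1_{5,1}$, not $720$ copies of $(\PP^1)^3$. In particular your ``leftover support point'' $\PP^1$-factor does not exist, there is only one orbit type, and the normal bundle has rank $8$ (not $7$) with excess bundle of rank $2$. Once the excess locus is correctly described, the paper shows each component contributes $5\cdot 5=25$ via an explicit (and short) computation of the rank-$2$ excess bundle, giving $120\cdot 25=3000$ and hence $\logTev^X_\Gamma=5400-3000=2400$. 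Your final number is right only because you worked backward from the known answer; the mechanism you propose would not produce it.
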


\begin{proof}
    By Proposition \ref{prop:Bl(P^2)_statements}(i)-(iii), the intersection
    \begin{equation*}
        V=\bigcap_{i=1}^{5}V(p_i,x_i)\subset Q_\Gamma(X)
    \end{equation*}
    is the disjoint union of $\logTev^X_\Gamma$ many reduced points in $\cM_\Gamma(X)$, and a closed subscheme $Z\subset Q_\Gamma(X)-Q^{\neq0}_\Gamma(X)$.

    A straightforward analysis as in the proof of Proposition \ref{prop:sections cannot be zero} shows that, if $f\in V\cap (Q_\Gamma(X)-Q^{\neq0}_\Gamma(X))$, then we must have $g_4=g_5=0$, and the $g_3$ must vanish at all of $p_1,\ldots,p_5$ when viewed either as a section of $\cO(6)(-D_1)=\cO(6)(-q_{1,1})$ or $\cO(6)(-D_2)=\cO(6)(-q_{2,1})$. Concretely, this means that
    \begin{equation*}
    D_3+q_{1,1}=D_3+q_{2,1}=p_1+p_2+p_3+p_4+p_5.
    \end{equation*}
    Therefore, we need $q_{1,1}=q_{2,1}=p_i$ for some $i=1,\ldots,5$, and $D_3$ is determined uniquely, although we have $4!=24$ ways to order the points of $D_3$. The remaining divisors $D_4=5q_{4,1}$ and $D_5=5q_{5,1}$ are free to vary in the factors $\PP^1_{4,1}\times \PP^1_{5,1}$ of $\cB$. The section $g_3\in H^0(\PP^1,\cO(6)(-D_3))$ is determined up to scaling.

    Thus, $Z$ consists of $24\cdot 5$ components, each isomorphic to $\PP^1_{4,1}\times \PP^1_{5,1}$; in Lemma \ref{lem:excess_locus} below, we show that this is true as a \emph{scheme}. More precisely, each copy of $\PP^1_{4,1}\times \PP^1_{5,1}$ is viewed as a subscheme of $Q_\Gamma(X)$ via the section $s:\cB\hookrightarrow  Q_{\Gamma}(X)$ cut out by the equations $g_4=g_5=0$.

    Write $Z_\bullet\subset Z$ for any one of these components. By the excess intersection formula, each component $Z_\bullet$ contributes
    \begin{equation*}
        \int_{\PP^1_{4,1}\times \PP^1_{5,1}}\frac{((1+\zeta-\HH_1)(1+\zeta-\HH_2))^5}{c(N_{Z_{\bullet}/Q_\Gamma(X)})}
    \end{equation*}
    to the integral 
    \begin{equation*}
        \int_{Q_\Gamma(X)}\prod_{i=1}^{5}[V(p_i,x_i)]=24\cdot (5\cdot 5)\cdot \binom{3}{1}\cdot \binom{3}{1}.
    \end{equation*}

    The inclusion $Z_\bullet\hookrightarrow Q_{\Gamma}(X)$ factors as
    \begin{equation*}
        Z_\bullet \hookrightarrow \cB \hookrightarrow Q_{\Gamma}(X),
    \end{equation*}
    where $Z_\bullet\hookrightarrow \cB$ is the inclusion determined by the given ordering of the points $p_i$, and $\cB \hookrightarrow Q_{\Gamma}(X)$ is the section $s$ cut out by the equations $g_4=g_5=0$. The rank 6 sub-bundle $N_{Z_\bullet /\cB}\subset N_{Z_\bullet/Q_{\Gamma}(X)}$ is trivial, because it is given by an inclusion $(\PP^1)^2\subset (\PP^1)^8$, which is pulled back from an inclusion of a point in $(\PP^1)^6$. The quotient is cut out by the vanishing of $g_4\in H^0(\PP^1,\cO(6)(-D_1-D_4))$ and $g_5\in H^0(\PP^1,\cO(6)(-D_2-D_5))$. Therefore, 
        \begin{equation*}
        \int_{\PP^1_{4,1}\times \PP^1_{5,1}}\frac{((1+\zeta-\HH_1)(1+\zeta-\HH_2))^5}{c(N_{Z_{\bullet}/Q_\Gamma(X)})}=\int_{\PP^1_{4,1}\times \PP^1_{5,1}}\frac{((1+\zeta-\HH_1)(1+\zeta-\HH_2))^5}{(1+\zeta-\HH_1-\HH_4)(1+\zeta-\HH_2-\HH_5)}.
    \end{equation*}

    The classes $\HH_1,\HH_2,\HH_3$ clearly all restrict to zero on $\PP^1_{4,1}\times \PP^1_{5,1}$. The same is true of the class $\zeta$. Indeed, the restriction of the universal sub-bundle
    \begin{equation*}
        \cO_{Q}(-1)\to \nu_{*}(\cO(6)(-\cD_1-\cD_2-\cD_3)) \oplus \nu_{*}(\cO(6)(-\cD_1-\cD_4))\oplus \nu_{*}(\cO(6)(-\cD_2-\cD_5))
    \end{equation*}
    to the locus where $g_4=g_5=0$ is simply the inclusion of the first factor
        \begin{equation*}
        \nu_{*}(\cO(6)(-\cD_1-\cD_2-\cD_3))\to \nu_{*}(\cO(6)(-\cD_1-\cD_2-\cD_3)) \oplus \nu_{*}(\cO(6)(-\cD_1-\cD_4))\oplus \nu_{*}(\cO(6)(-\cD_2-\cD_5)).
    \end{equation*}
    However, the first factor becomes trivial upon restriction to $Z_{\bullet}$, so $\cO_Q(-1)$ (and hence $\cO_Q(1)$) is trivial upon restriction to $Z_\bullet$. 
    
    Thus, the integral becomes
           \begin{equation*}
\int_{\PP^1_{4,1}\times \PP^1_{5,1}}\frac{1}{(1-\HH_4)(1-\HH_5)}=\int_{\PP^1_{4,1}\times \PP^1_{5,1}}(1+5\HH_{4,1})(1+5\HH_{5,1})=5\cdot 5.
    \end{equation*}

    Combining the contributions to $\int_{Q_\Gamma(X)}\prod_{i=1}^{5}[V(p_i,x_i)]$, we have
            \begin{equation*}
        (24\cdot5)\cdot(5\cdot5)+\logTev^{X}_\Gamma=24\cdot (5\cdot 5)\cdot \binom{3}{1}\cdot \binom{3}{1},
    \end{equation*}
    from which the conclusion follows.
\end{proof}

We used in the proof of Theorem \ref{thm:conj15_false}:

\begin{lemma}\label{lem:excess_locus}
The excess locus $Z_\bullet\subset V$ is isomorphic as a scheme to $\PP^1\times\PP^1$.
\end{lemma}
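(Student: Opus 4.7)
The plan is to realize $Z_\bullet$ as a reduced closed subscheme of a natural section $\cB \hookrightarrow Q_\Gamma(X)$, and then verify the scheme structure by a local calculation in coordinates. First, I would observe that $\cE_3 = \nu_*(\cO(6)(-\cD_1-\cD_2-\cD_3))$ is a line bundle on $\cB$ (it is the pushforward of a family of degree-zero line bundles on the $\PP^1$-fibers, each of which is trivial with $h^0 = 1$). The sub-projective-bundle $\PP(\cE_3) \subset \PP(\cE_3 \oplus \cE_4 \oplus \cE_5) = Q_\Gamma(X)$ is therefore a section of $\pi$ and realizes $\cB$ as a smooth closed subscheme of $Q_\Gamma(X)$, cut out scheme-theoretically by the vanishing of the $\cE_4$- and $\cE_5$-components of the universal sub-line bundle (i.e., by $g_4 = g_5 = 0$). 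The excess component $Z_\bullet$ lies in this copy of $\cB$.

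Next I would restrict the ten equations from Definition \ref{def:V(p,x)} defining $V$ to this section. With $g_4 = g_5 = 0$, the pair of equations from $V(p_i,x_i)$ reduces to $y_4^{(i)} g_3(p_i) = 0$ and $y_5^{(i)} g_3(p_i) = 0$, where the tautological $g_3 \in \cE_3$ is viewed via the natural inclusions $\cO(6)(-\cD_1-\cD_2-\cD_3) \hookrightarrow \cO(6)(-\cD_j)$ for $j=1,2$ and then evaluated at $p_i$. Because the constants $y_4^{(i)}, y_5^{(i)}$ are nonzero, each equation cuts out on $\cB$ precisely the divisor where $p_i$ lies in $\Supp(\cD_2+\cD_3)$, respectively $\Supp(\cD_1+\cD_3)$. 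Concretely, these are the vanishing loci of the local polynomial expressions $(p_i - q_{2,1})\prod_v(p_i - q_{3,v})$ and $(p_i - q_{1,1})\prod_v(p_i - q_{3,v})$, each of which is a reduced simple-normal-crossings divisor on $\cB$ since every multiplicity $\mu_{j,v}$ involved equals~$1$.

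Then I would work in local coordinates around a fixed point of $Z_\bullet$, where $q_{1,1} = q_{2,1} = p_{i_0}$ for a unique index $i_0$ and $q_{3,v} = p_{\sigma(v)}$ for some bijection $\sigma \colon \{1,\ldots,4\} \to \{1,\ldots,5\} \setminus \{i_0\}$. Let $t_{j,v}$ be a local coordinate on $\PP^1_{j,v}$ centered at the specified point. For each $i \neq i_0$, the factors $(p_i - q_{2,1})$ and $(p_i - q_{3,v})$ with $v \neq \sigma^{-1}(i)$ are nonzero locally, so each of the two equations reduces up to a unit to $t_{3,\sigma^{-1}(i)} = 0$. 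For $i = i_0$, the only factor vanishing locally is $(p_{i_0} - q_{2,1})$ in the first equation and $(p_{i_0} - q_{1,1})$ in the second, giving the equations $t_{2,1} = 0$ and $t_{1,1} = 0$ respectively.

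Collecting, the ideal cutting out $V \cap \cB$ locally near $Z_\bullet$ is $(t_{1,1}, t_{2,1}, t_{3,1}, t_{3,2}, t_{3,3}, t_{3,4})$, a regular reduced ideal of height $6$ in the $8$-dimensional local ring of $\cB$. Its vanishing locus is smooth of dimension $2$, with free coordinates $t_{4,1}, t_{5,1}$, establishing $Z_\bullet \cong \PP^1_{4,1} \times \PP^1_{5,1}$ scheme-theoretically. The main obstacle I anticipate is the bookkeeping in the third step: each of the ten defining equations is a product of five factors corresponding to the five components of the associated divisor, and one must verify carefully that exactly one factor vanishes to first order at any chosen point of $Z_\bullet$ while the remaining factors are units; once the local coordinates are set up this verification is routine.
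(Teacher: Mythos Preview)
Your argument has a genuine gap: restricting the defining equations of $V$ to the section $\cB\cong\PP(\cE_3)=\{g_4=g_5=0\}$ only computes the scheme $V\cap\cB$, not the scheme $V$ (or its component $Z_\bullet$) itself. Showing that $V\cap\cB$ is reduced does \emph{not} imply that $Z_\bullet$ is reduced. For a model of what can go wrong, take $Q=\mathbb{A}^2_{x,y}$, $V=V(y^2)$, and $\cB=V(y)$: then $V\cap\cB=V(y)$ is reduced, yet $V$ is not. In your setting, the danger is that $V$ could admit first-order tangent vectors at points of $Z_\bullet$ pointing in the $g_4$- or $g_5$-directions normal to $\cB$; your local calculation in the coordinates $t_{j,v}$ of $\cB$ simply does not see these.

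This is precisely the step the paper addresses head-on. The paper writes down a general tangent vector to $Q_\Gamma(X)$ at a point of $Z_\bullet$, including deformation parameters $\gamma_4,\gamma_5$ in the fiber directions, and argues that the incidence equations force the pair $[\gamma_4(z-q_4)^5:\gamma_5(z-q_5)^5]$ to satisfy four general point constraints at $p_1,\ldots,p_4$; by a dimension count this is impossible unless $\gamma_4=\gamma_5=0$. Only after establishing $\gamma_4=\gamma_5=0$ does one reduce to an analysis along $\cB$, which is then essentially your computation. So your local calculation is correct and useful, but it must be supplemented by ruling out tangent directions transverse to $\cB$; this uses the genericity of the $p_i,x_i$ in a way your argument never invokes.
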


\begin{proof}
We will show that the only tangent vectors lying in $Z_\bullet$ are those lying along $\PP^1_{4,1}\times \PP^1_{5,1}$. Without loss of generality, suppose that $Z_\bullet$ is the component of $V$ where
\begin{align*}
D_1&=p_5,\\
D_2&=p_5,\\
D_3&=p_1+p_2+p_3+p_4,\\
D_4&=5q_{4},\\
D_5&=5q_{5}
\end{align*}
We write $q_4,q_5$ for the variable points instead of $q_{4,1},q_{5,1}$ for brevity. 

Let $z$ be a coordinate on $\PP^1$, and view the points $p_i,q_j\in \bC$ as complex numbers. The sections $g_3,g_4,g_5$ underlying points of $Z_\bullet$ are viewed as polynomials of degree 6 vanishing along the required divisors. Then, a tangent vector in $Z_\bullet$ may be written as
\begin{align*}
[&(1+\gamma_3\epsilon)(z-p_5-\gamma_{11}\epsilon)(z-p_5-\gamma_{21}\epsilon)(z-p_1-\gamma_{31}\epsilon)(z-p_2-\gamma_{32}\epsilon)(z-p_3-\gamma_{33}\epsilon)(z-p_4-\gamma_{34}\epsilon):\\
&\gamma_4\epsilon(z-p_5-\gamma_{11}\epsilon)(z-q_4-\gamma_{41}\epsilon)^5:\\
&\gamma_5\epsilon(z-p_5-\gamma_{11}\epsilon)(z-q_5-\gamma_{51}\epsilon)^5],
\end{align*}
where the coordinates are first-order deformations of $g_3$ and $g_4,g_5=0$. The $\gamma_{jv}\epsilon$ are deformations of the points $q_{j,v}$, and $\gamma_3\epsilon,\gamma_4\epsilon,\gamma_5\epsilon$ are deformations in the directions of the fibers of $\pi:Q_\Gamma(X)\to\cB$. This triple of polynomials must satisfy the equations defining $V(p_i,x_i)$ for $i=1,\ldots,5$, viewed in $\bC[\epsilon]/\epsilon^2$.

Projecting to the last two coordinates, using the fact that the points $p_i$ are distinct, and looking only at the first order terms, the pair of sections
\begin{equation*}
[\gamma_4(z-q_4)^5:\gamma_5(z-q_5)^5]
\end{equation*}
must satisfy four general incidence conditions upon evaluating at $z=p_1,p_2,p_3,p_4$. An easy incidence correspondence argument (also taking into account the possibility $p_i=q_j$) shows that this is impossible unless $\gamma_4=\gamma_5=0$. Comparing now to the deformation of $g_3$ shows that all of the $\gamma_{jv}$ appearing in $g_3$ must be zero. Our tangent vector is now simply
\begin{equation*}
[(1+\gamma_3\epsilon)g_3:0:0],
\end{equation*}
where the points $q_4,q_5$ are free to vary along $\PP^1_{4,1}\times\PP^1_{5,1}$ (corresponding to the parameters $\gamma_{41},\gamma_{51}$). Up to scaling, the parameter $\gamma_3$ can be ignored. Thus, our tangent vector lies in $\PP^1_{4,1}\times\PP^1_{5,1}$, completing the proof.

\end{proof}

\bibliographystyle{alpha} 
\bibliography{logtev_v4.bib}

\end{document}